\title{Luzin's Condition (N) and Modulus of Continuity}
\author{P.~Koskela}
\address{Matematiikan ja tilastotieteen laitos, PL 35 (MaD), 40014 Jyv\"askyl\"an yliopisto, Finland}
\email{pekka.j.koskela@jyu.fi}
\author{J.~Mal\'y}
\address{Department of Mathematical Analysis,
Faculty of Mathematics and Physics,
Charles University,
Sokolovsk\'a 83, 18675 Praha 8,
Czech Republic}
\email{jan.maly@mff.cuni.cz}
\author{T.~Z\"urcher}
\address{Matematiikan ja tilastotieteen laitos PL 35 (MaD), 40014 Jyv\"askyl\"an yliopisto, Finland}
\email{thomas.t.zurcher@jyu.fi}
\thanks{The first and third authors acknowledge the support by the Academy of Finland grant numbers
131477 and 251650, respectively.
The research of the second author was supported by the grant
GA\,\v{C}R P201/12/0436}
\subjclass[2010]{26B15, 26B35, 46E35}
\def\ball{\overline B}
\def\en{\mathbb N}
\def\er{\mathbb R}
\def\rd{\er^d}
\def\rn{\er^n}
\let\norm=\abs
\DeclareMathOperator{\loc}{loc}
\def\ordmod{\omega_f^*}
\def\medmod{\omega_f}
\def\H{\mathcal H}
\def\eqn#1$$#2$${\begin{equation}\label#1#2\end{equation}}
\newcommand{\measure}[1]{\lvert#1\rvert}
\newcommand{\lopen}[2]{(#1,#2]}
\newcommand{\ropen}[2]{[#1,#2)}
\begin{document}

\begin{abstract}
In this paper, we establish Luzin's condition~(N)
for mappings in certain Sobolev-Orlicz spaces with certain moduli of continuity. Further, given a mapping in these Sobolev-Orlicz spaces, we give bounds on the size of the exceptional set where Luzin's condition~(N) may fail. If a mapping violates Luzin's condition~(N), we show that there is a Cantor set of measure zero that is mapped to a set of positive measure.
\end{abstract}

\maketitle

\section{Introduction}
In this introduction, we assume for simplicity that our mappings are continuous. In the entire paper, we assume that $n\geq 2$.

As seen by
the constructions of J.~Mal\'y and O.~Martio, for each $n\geq 2$ there exists a continuous mapping $f\colon \Omega\to \field{R}^n$ in $W^{1,n}(\Omega;\field{R}^n)$ such that
$f$ maps a set of $n$\nobreakdash-dimensional Lebesgue measure zero onto an $n$\nobreakdash-dimensional cube, see \cite[Section~5]{mal_lusins_1995}.
However, if we require better integrability for the differential, then $f$ satisfies \emph{Luzin's condition~(N)}: the image of
each set of zero $n$\nobreakdash-dimensional Lebesgue measure is also of
measure zero. More precisely, if additionally
\begin{equation}\label{Orlicz integrability}
\int_\Omega \norm{Df}^n\log^{\lambda}(e+\norm{Df})\, dy<\infty
\end{equation}
for some $\lambda>n-1$, then $f$ satisfies Luzin's condition~(N), see \cite[Example~5.3]{kauhanen_functions_1999}. For $\lambda=n-1$, there exist continuous mappings in $W^{1,n}(\field{R}^n,\field{R}^n)$ satisfying \eqref{Orlicz integrability} and violating Luzin's condition~(N), see \cite[Theorem~5.2]{kauhanen_functions_1999}. Such a mapping $f$ cannot be H\"older continuous. Indeed, in
\cite{mal_lusins_1995} the authors showed that H\"older continuous mappings in $W^{1,n}(\Omega;\field{R}^n)$ satisfy Luzin's condition~(N), see their Theorem~C. Based on this result, the authors further showed in Theorem~G that for each (continuous) mapping in $W^{1,n}(\Omega;\field{R}^n)$, there is a zero-dimensional set such that the mapping satisfies Luzin's condition~(N) outside this set.

In this paper, we consider mappings that satisfy \eqref{Orlicz integrability} for $0\leq\lambda\leq n-1$ and have a modulus of continuity that is slightly weaker than H\"older continuity. Our first result reads as follows.

\begin{theorem}\tlabel{t:mainintro}
Let $0\leq \lambda\leq n-1$ and $\alpha=1-\frac{\lambda}{n-1}$. Suppose $f\in W^{1,1}_\textnormal{loc}(\Omega;\field{R}^d)$,
\begin{equation*}
\int_\Omega \norm{Df}^n\log^{\lambda}(e+\norm{Df})\; dy<\infty,
\end{equation*}
and $f$ has the  modulus of continuity
\begin{equation*}
\norm{f(x)-f(y)}\leq
\begin{cases}
\exp(-\mu\log^\alpha(1/\abs{x-y})), & \alpha>0,\\
\log^{-\mu}(1/\abs{x-y}), & \alpha=0,
\end{cases}
\end{equation*}
for some $\mu>0$. Then $f$ satisfies Luzin's condition~(N) in the sense
that sets of zero $n$-dimensional Lebesgue measure get mapped to sets of zero
$n$-dimensional Hausdorff measure.
\end{theorem}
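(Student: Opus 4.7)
The strategy follows the classical Vitali-covering approach to Luzin's condition~(N), tailored to the Orlicz setting. Set $\Phi(t)=t^n\log^\lambda(e+t)$ throughout.

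\emph{Reduction and covering.} Writing a null set as a countable union of compact null sets, it suffices to show $\mathcal H^n(f(K))=0$ for an arbitrary compact $K\subset\Omega$ with $|K|=0$. Fix $\varepsilon>0$; by absolute continuity of the integral $\int\Phi(|Df|)\,dy$, choose an open set $U\supset K$ with $\overline U\Subset\Omega$ and $\int_U\Phi(|Df|)\,dy<\varepsilon$. Then, for any preassigned $\delta>0$, the Vitali $5r$-covering lemma produces pairwise disjoint balls $B_i=B(x_i,r_i)$ with $x_i\in K$, $r_i<\delta$, $5B_i\subset U$, and $K\subset\bigcup_i 5B_i$, so that $\mathcal H^n(f(K))\leq\sum_i|f(5B_i)|$.

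\emph{Key per-ball estimate.} The core of the proof is to show that there is a function $h(r)\to 0$ as $r\to 0^+$ such that, for every ball $B(x,r)$ with $B(x,2r)\subset U$ and $r$ sufficiently small,
\begin{equation*}
|f(B(x,r))|\leq h(r)\int_{B(x,2r)}\Phi(|Df|)\,dy.
\end{equation*}
I would derive this by combining two bounds. First, the assumed modulus of continuity gives $|f(B(x,r))|\leq C\omega(r)^n$ directly. Second, a Morrey/capacity-type estimate in the Orlicz-critical case, obtained by integrating Sobolev embeddings on spheres $S(x,s)$, $s\in(r,2r)$ (on which $f$ is absolutely continuous for a.e.~$s$), bounds the oscillation of $f$ on $B(x,r)$ by a quantity of the form $\eta(r)\bigl(\int_{B(x,2r)}\Phi(|Df|)\bigr)^{1/n}$, where $\eta(r)$ is of order $\exp(-c\log^{1-\lambda/(n-1)}(1/r))$ in the exponential regime and $\log^{-c}(1/r)$ in the logarithmic regime. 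Interpolating the trivial bound with the Sobolev one, and exploiting that $\omega$ and $\eta$ have the same functional form when $\alpha=1-\lambda/(n-1)$, yields the claimed $h(r)\int\Phi(|Df|)$ inequality.

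\emph{Summation.} Summing the per-ball inequality and using that the enlargements $2B_i$ have bounded overlap (a consequence of the disjointness of the $B_i$) gives
\begin{equation*}
\mathcal H^n(f(K))\leq C h(\delta)\int_U\Phi(|Df|)\,dy\leq Ch(\delta)\varepsilon.
\end{equation*}
Letting $\delta\to 0$ (and then $\varepsilon\to 0$) forces $\mathcal H^n(f(K))=0$.

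\emph{Main obstacle.} The delicate point is the calibration of the per-ball estimate. The very choice $\alpha=1-\lambda/(n-1)$ in the hypothesis is what makes the assumed modulus of continuity match the natural Orlicz-Morrey modulus produced by $W^{1,\Phi}$ on spheres. The extreme cases $\lambda=0$ (Hölder modulus, essentially the Mal\'y--Martio theorem) and $\lambda=n-1$ (purely logarithmic modulus, where the Sobolev embedding is fully critical) correspond to the boundary of our parameter range; handling all $\lambda\in[0,n-1]$ uniformly requires a careful treatment of the Trudinger--Morrey inequality across the whole Orlicz scale, and this is where the main technical work of the proof should be concentrated.
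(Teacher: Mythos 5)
There is a genuine gap, and it sits exactly at the point you yourself flag as the ``main obstacle'': the per-ball estimate $|f(B(x,r))|\leq h(r)\int_{B(x,2r)}\Phi(|Df|)\,dy$ with $h(r)\to 0$, uniformly over balls and over all small radii $r$, is not just unproved in your sketch --- it is false in the regime $0\le\lambda\le n-1$. The ``Morrey/capacity-type'' second bound you invoke, an oscillation estimate over $B(x,r)$ with a decaying prefactor $\eta(r)\sim\exp(-c\log^{\alpha}(1/r))$ in terms of $\bigl(\int_{B(x,2r)}\Phi(|Df|)\bigr)^{1/n}$, does not exist: with the concentric enlargement factor fixed at $2$, the critical Orlicz--Sobolev estimate (cf.\ Lemma~\ref{l:hoelder}, where the gain is only $(1+\log(R/r))^{n-1}$ with $R/r$ the ratio of the two radii) produces a bounded constant, not a vanishing one, and for $\lambda\le n-1$ the Riesz-potential computation shows $\||y|^{1-n}\|$ fails to lie in the complementary Orlicz class, so no pointwise oscillation control with decaying weight is available. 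If such an estimate held, every map with $\int\Phi(|Df|)<\infty$ would automatically have your modulus of continuity and satisfy condition (N), contradicting the Mal\'y--Martio and Kauhanen--Koskela--Mal\'y counterexamples (and the paper's own Example~1.3, where a null Cantor set of exactly this gauge size is blown up). The other ingredient, $|f(B(x,r))|\le C\omega(r)^n$, is likewise useless on its own, since $\omega(r)\gg r$ means $\sum_i\omega(r_i)^n$ need not be small for covers of a Lebesgue-null set. So the ``interpolation'' of the two bounds is the entire content of the theorem, and your plan supplies no mechanism for it. (A secondary flaw: even granting the per-ball bound, disjointness of the Vitali balls $B_i$ does not give bounded overlap of the dilates $5B_i$ or $10B_i$ --- small disjoint balls accumulating at a point can have all their dilates containing that point --- so the summation step would need Besicovitch or an estimate without enlargement.)

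The paper's actual mechanism is different and genuinely pointwise-adapted: one fixes image scales $R_k=e^{-k}$ and chooses point-dependent radii $r_k(x)=\inf\{t:\omega_f(x,t)\ge e^{-k}\}$ from the level sets of the (median) modulus at $x$; the median-oscillation estimate of Lemmas~\ref{l:riesz}--\ref{l:hoelder} then gives $R_k^n\le C(1+\log(r_k/r_{k+1}))^{n-1}\int_{A_k}|Df|^n$, and an elementary iteration lemma shows, using the assumed decay $\omega_f(x,r)\le\exp(-\mu\log^\alpha(1/r))$, that for \emph{infinitely many} $k$ (not all $k$!) the factor $(1+\log(r_k/r_{k+1}))^{n-1}$ is dominated by $\log^\lambda(e+e^{-k}/r_k)$, which Jensen's inequality then absorbs into $\int_{A_k}|Df|^n\log^\lambda(e+|Df|)$. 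This yields a Rado--Reichelderfer-type inequality only along a sparse, $x$-dependent sequence of scales, which is then fed into a covering criterion that runs Vitali on the \emph{image} side (disjoint balls $B(f(x),R_k)$ covering $f(N)$); the points where the modulus is actually H\"older-good along a sequence are handled separately via weak approximate H\"older continuity. Your all-scales, domain-side formulation cannot be repaired into this: the sharpness example shows the good scales must be selected, and selecting them is where Lemma~3.4-type combinatorics, absent from your proposal, are indispensable.
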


We prove a corresponding result about the size of the exceptional set as well. Our continuity assumption below can actually be removed provided we choose a quasicontinuous representative, see Section~\ref{Sec exceptional set}.
\begin{theorem}\tlabel{Size of exceptional set}
Let $\Omega\subset \field{R}^n$ open, $n\geq 2$. Suppose $f\in W^{1,n}(\Omega;\field{R}^d)$ is continuous, $0\leq\lambda\leq n-1$, and $\alpha=1-\frac{\lambda}{n-1}$. If
\begin{equation*}
\int_\Omega \norm{Df}^n\log(e+\norm{Df})^\lambda\, dy<\infty,
\end{equation*}
then there exists a set $E$ such that $f$ satisfies Luzin's condition~(N) in $\Omega\setminus E$, and $\mathcal{H}^\varphi(E)=0$ whenever $\varphi$ is of the form
\begin{equation*}
\varphi(r)=
\begin{cases}
\exp(-\gamma\log^{\alpha}(\tfrac{1}{r})),&\alpha>0,\\
\log^{-n-\gamma}(\tfrac{1}{r}),&\alpha=0,
\end{cases}
\qquad
0<r<1,
\end{equation*}
for some $\gamma>0$.
\end{theorem}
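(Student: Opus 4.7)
I will define the exceptional set $E$ as the set of points where $f$ fails to have the modulus of continuity required in Theorem~\ref{t:mainintro}, then apply (the argument of) that theorem on $\Omega\setminus E$ to deduce Luzin's condition~(N), and finally estimate $\mathcal{H}^\varphi(E)=0$ via an Orlicz-Sobolev Morrey-type density bound combined with a covering argument.

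For each $\mu>0$, let $\psi_\mu$ denote the modulus of continuity function of Theorem~\ref{t:mainintro}, i.e.\ $\psi_\mu(r)=\exp(-\mu\log^\alpha(1/r))$ if $\alpha>0$ and $\psi_\mu(r)=\log^{-\mu}(1/r)$ if $\alpha=0$. Set
\[
E_\mu := \Bigl\{x\in\Omega : \limsup_{r\to 0^+}\frac{\operatorname{osc}(f,B(x,r))}{\psi_\mu(r)}>0\Bigr\},\qquad E:=\bigcap_{k\in\en}E_{1/k}.
\]
If $x\notin E$, then $x\notin E_{1/k}$ for some $k$, and summing oscillation estimates at $x$ over dyadic scales gives $\norm{f(x)-f(y)}\leq C\,\psi_{1/k}(\abs{x-y})$ for every $y\in\Omega\setminus E_{1/k}$ close enough to $x$. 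Since the proof of Theorem~\ref{t:mainintro} invokes the modulus of continuity only between pairs contained in a Vitali-type cover, the same argument localized to $\Omega\setminus E_{1/k}$ yields Luzin's~(N) on $\Omega\setminus E_{1/k}$, and hence on $\Omega\setminus E$.

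For the measure estimate, each $x\in E_\mu$ admits radii $r_j\to 0$ with $\operatorname{osc}(f,B(x,r_j))\geq c\,\psi_\mu(r_j)$. A Morrey-type inequality for Sobolev-Orlicz functions — the same ingredient that drives the capacity estimates behind Theorem~\ref{t:mainintro} — converts this oscillation bound into a quantitative lower bound of the form
\[
\int_{B(x,r_j)}\norm{Df}^n\log^\lambda(e+\norm{Df})\,dy \;\geq\; \eta(\mu,r_j),
\]
and a direct computation shows $\eta(\mu,r)\geq\varphi(r)$ as soon as $\mu$ is small enough relative to the prescribed $\gamma$. A Vitali $5r$-covering of $E_{1/k}$ by such balls, combined with the absolute continuity of $A\mapsto\int_A\norm{Df}^n\log^\lambda(e+\norm{Df})\,dy$ on shrinking neighborhoods of $E_{1/k}$, then forces $\mathcal{H}^\varphi(E_{1/k})$ to be arbitrarily small, so $\mathcal{H}^\varphi(E)=0$.

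The main obstacle is the sharp matching of the Orlicz exponent $\lambda$ against the Hausdorff gauge $\varphi$: one must verify that the Morrey density lower bound has precisely the right quantitative form so that, for every $\gamma>0$, some $\mu>0$ makes $\eta(\mu,r)$ dominate $\varphi(r)$ as $r\to 0$. This is what dictates the particular normalization of $\varphi$, and in particular the exponent $-n-\gamma$ (rather than something cleaner like $-\mu$) in the $\alpha=0$ case. A secondary technicality is to check that the proof of Theorem~\ref{t:mainintro} indeed localizes cleanly to the typically non-open set $\Omega\setminus E_{1/k}$, using only pairs of points from that set together with the global Sobolev-Orlicz integrability of $Df$.
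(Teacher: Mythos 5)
The Luzin-(N) half of your plan is fine, though not for the reason you give: no ``localization'' of \tref{t:mainintro} is needed, because \tref{t:main} is already a pointwise statement (it only asks for a median modulus of continuity at each point of the null set), and a point outside your $E_{1/k}$ satisfies exactly that hypothesis with $\mu=1/k$. The genuine gap is in the measure estimate. The ``Morrey-type inequality'' you rely on --- that $\operatorname{osc}(f,B(x,r))\ge c\,\psi_\mu(r)$ forces $\int_{B(x,r)}\norm{Df}^n\log^\lambda(e+\norm{Df})\,dy\ge\eta(\mu,r)$ with $\eta(\mu,r)\ge\varphi(r)$ --- does not exist for any $0\le\lambda\le n-1$. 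The class $W^1L^n\log^\lambda L$ embeds into continuous functions only when $\lambda>n-1$; below that threshold an oscillation of any prescribed size $s$ inside $B(x,r)$ can be produced at arbitrarily small Orlicz cost by concentrating the transition in a thin annulus $B(y,\rho)\setminus B(y,\varepsilon\rho)$ deep inside the ball: for $\lambda<n-1$ the radial profile with $\norm{Df}\sim s/(t\log(1/\varepsilon))$ has Orlicz energy of order $s^n\log^{\lambda+1-n}(1/\varepsilon)\to0$, and for $\lambda=n-1$ the profile $\norm{Df}\sim a/(t\log(e+1/t))$ gives order $s^n(\log\log(1/\varepsilon))^{1-n}\to0$, as $\varepsilon\to0$. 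So no lower bound depending only on $(\mu,r)$ can hold, let alone one dominating $\varphi(r)$; the only estimate genuinely available (Lemma~\ref{l:hoelder}) involves two scales and degrades like $(1+\log(R/r))^{n-1}$, and the paper's Example is built on precisely this mechanism: large oscillation at scale $R_j$ paid for by very little Orlicz energy concentrated near the much smaller scale $r_j$.

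The workable argument runs in the contrapositive and across scales, which is why the paper defines the exceptional set through the density of the plain $n$-energy rather than through oscillation: $E$ is the set where the bound $\int_{B(x,r)}\norm{Df}^n\,dy\le\varphi(2r)$ fails at arbitrarily small radii, with $\varphi$ calibrated to $\mu$ by $\gamma=(n+1)\mu$ for $\alpha>0$, resp.\ $n+\gamma=n(1+\mu)$ for $\alpha=0$. Then $E$ contains no Lebesgue point of $\norm{Df}^n$, so $\abs{E}=0$, and \tref{Measure bound} (Besicovitch covering plus absolute continuity of the integral of $\norm{Df}^n$) yields $\mathcal H^\varphi(E)=0$; at points where the density bound holds at all small radii, a telescoping chain of Poincar\'e estimates over dyadic scales (\tref{Lebesgue point} and \tref{modulus bounds}) upgrades it to $\omega_f(x,r)\le C\psi_\mu(r)$, and then \tref{t:main} finishes. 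In particular, the exponent matching you flagged as the main obstacle is not produced by any capacity-type lower bound at the oscillation scale: it comes from the summability requirement $\varphi(r)^{1/n}\le\psi(2r)-\psi(r)$ of \tref{Sum of moduli}, which is what makes the dyadic sum collapse to $C\psi(r)$ and which forces $\gamma=(n+1)\mu$ and the exponent $-n-\gamma$. Two further small points: before absolute continuity can shrink your covering sum you must know $\abs{E_{1/k}}=0$, which itself needs the density-to-modulus implication rather than being automatic; and, as you do, one should intersect the bad sets over a sequence $\mu_k\to0$ so that a single $E$ serves every $\gamma>0$.
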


Regarding necessity of our modulus of continuity, we construct the following example.

\begin{example}\tlabel{ExampleModulusIntro}
Let $0\leq \lambda<n-1$ and $0< \alpha<\frac{n-1-\lambda}{n}$. Then there is a mapping $f\in W^{1,1}(\field{R}^n;\field{R}^d)$
such that
\begin{equation*}
\int_{\field{R}^n} \norm{Df}^n\log^\lambda(e+\norm{Df})\, dy<\infty
\end{equation*}
and $f$ has modulus
of continuity no worse than
\begin{equation*}
\Psi(t)=C\exp\bigl(-\log 2\log^\alpha(\tfrac{1}{t})\bigr)
\end{equation*}
and violates Luzin's condition~(N).
\end{example}
The \nref{ExampleModulusIntro} above shows that the logarithmic scale in \tref{t:mainintro} is essentially sharp. However, we have a slight mismatch between the exponent $\alpha$ in \tref{t:mainintro} and the one in the \nref{ExampleModulusIntro}. As $n$ gets larger, the gap gets smaller. We do not know if the positive result or the example could be improved. For the endpoint
case $\lambda=n-1$ see the discussion at the end of Section~\ref{Section Example}.

In \tref{ExampleModulusIntro} a compact, perfect, totally disconnected set of measure zero is mapped onto a set of positive measure. We further show that this phenomenon is ubiquitous, i.e.\ if we have a continuous mapping that maps a set of measure zero onto  a set of positive measure, then there is a compact, perfect, totally disconnected set that is blown up as well. Since the verification of Luzin's condition~(N) then only needs to focus on these Cantor sets, we would like to know if additional regularity can be required for the blown up sets. If $f\in W^{1,n}(\Omega;\field{R}^d)$,
then we can actually find a zero-dimensional Cantor set that is blown up. We would like to know if one could require the set to be porous as well, and in dimension two if the set could be required to lie on a quasicircle. For the significance of these questions see \cite{KK}, \cite{KZ}.

Some of our results were announced in \cite{KMZ}. The paper is organized as follows. The following section contains notations and basic definitions. Section~\ref{Section Condition N} deals with Luzin's condition~(N). Especially, we prove a slightly stronger statement than \tref{t:mainintro}. We continue by constructing \tref{ExampleModulusIntro} in Section~\ref{Section Example}. In the penultimate section, we study the size of the exceptional set. Our reasoning is closed by regularity considerations in Section~\ref{Section Regularity}.

\section{Notation and basic definitions}\label{Section Notation}
We will use open and closed balls. To distinguish them, we use an overbar for closed balls and the plain symbol for open balls.
\begin{definition}[gauge function]
A \emph{(Hausdorff) gauge function} $h\colon [0,\infty]\to [0,\infty]$ is a
nondecreasing function that is positive for $t>0$ and continuous from the right.
\end{definition}

\begin{example}
The function $\varphi \colon [0,\infty]\to [0,\infty]$ defined as
\begin{equation*}
\varphi(t)=
\begin{cases}
0, & t=0, \\
D \exp(-C\log^\alpha(\frac{1}{t})), & 0<t<1,\\
\infty, & t\geq 1,\\
\end{cases}
\end{equation*}
where $\alpha,\, C,\, D>0$, is a gauge function.
\end{example}

\begin{definition}[Hausdorff measure]
Let $A\subset \field{R}^n$, $h$ be a gauge function, and $0<\delta\leq \infty$. Define
\begin{align*}
\mathcal{H}^h_\delta&:=\inf \Bigl\{\sum_{j=1}^\infty h(\diam C_j):\; A\subset \cup_{j=1}^\infty C_j,\; \diam C_j\leq\delta \Bigr\},\\
\intertext{and the \emph{Hausdorff $\H^h$\nobreakdash-measure} as}\\
\mathcal{H}^h(A)&:=\lim_{\delta\to 0}\mathcal{H}^\delta(A).
\end{align*}
If $h(t)=t^s$ for some $s\geq 0$, we simply write $\mathcal{H}^s$ instead of $\mathcal{H}^{t^s}$.
The \emph{Hausdorff dimension} $\dim_H(A)$ of a set $A\subset \Omega$ is defined as $\inf\{s\geq 0\colon \mathcal{H}^s(A)=0\}$.
\end{definition}
In what follows $\Omega$ is an open set in $\field{R}^n$.

\begin{definition}[(median) modulus of continuity] Let $f:\Omega\to\rd$ be a measurable function and $x\in\Omega$.
We define the \emph{modulus of continuity}
$$
\ordmod(x,r)=\inf\{s\ge 0: \norm{f-f(x)}\le s \text{ on }B(x,r)\}
$$
and the \emph{median modulus of continuity}
$$
\medmod(x,r)=\inf\{s\ge 0: |\{y\in B(x,r):\norm{f(y)-f(x)}\ge s\}| < \tfrac12|B(x,r)|\}.
$$
Obviously $\medmod(x,r)\le \ordmod(x,r)$, so that results using $\medmod$ are better.
\end{definition}

\section{Pointwise modulus of continuity and Luzin's condition~(N)}\label{Section Condition N}
The following result is a  more general version of \tref{t:mainintro}. The current section is devoted to its proof.
\begin{theorem}\tlabel{t:main}
Let $0\leq\lambda\le n-1$ and $\alpha=1-\frac{\lambda}{n-1}$.
Let $f\in W_{\loc}^{1,1}(\Omega;\rd)$ and $N\subset \Omega$ be a Lebesgue null set.
Suppose that for each $x\in N$ there exist $R=R(x)>0$ and $\mu=\mu(x)>0$ such that
$B(x,R)\subset\Omega$ and
$$
\medmod(x,r)\le
\begin{cases}
 \exp\bigl(-\mu\log^{\alpha}\bigl(1/r\bigr)\bigr) ,&\alpha>0,\\
\log^{-\mu}(1/r),&\alpha=0,
\end{cases}
\qquad 0<r\le R.
$$
If
$$
\int_{\Omega}|Df|^n\log^{\lambda}(e+|Df|)\,dy < \infty,
$$
then $\H^n(f(N))=0$.
\end{theorem}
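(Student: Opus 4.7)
The plan is a three-step covering-and-summation scheme. (1) I prove a quantitative oscillation estimate for $f$ on small balls around each $x\in N$, combining the median-modulus hypothesis with a borderline Poincar\'e-Sobolev inequality for the Orlicz class $L^n\log^\lambda L$. (2) I cover $N$ by a Vitali family built using the absolute continuity of the Orlicz integral together with $|N|=0$. (3) I sum the oscillation bounds to conclude $\H^n(f(N))=0$.

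\textbf{Oscillation estimate.} The key lemma asserts a pointwise bound of the shape
\[
\diam f(B(x,r)) \;\le\; C\,\medmod(x, 2r) \;+\; C\,\Phi(x,r),
\]
valid for $x\in N$ and $0<r\le R(x)/2$, where $\Phi(x,r)$ is built from the Orlicz energy $\int_{B(x,2r)}\norm{Df}^n\log^\lambda(e+\norm{Df})\,dy$ together with a logarithmic weight roughly of the form $(\log(1/r))^{-(n-1-\lambda)/n}$. To prove it, I run a chain of shrinking balls $B_k=B(x, 2^{1-k}r)$ and control differences $|m_{k+1}-m_k|$ of medians of $f$ on consecutive balls by a median-based Poincar\'e-Sobolev estimate adapted to the Orlicz norm; summing in $k$ and applying H\"older to the resulting series gives $\Phi(x,r)$. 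The median-modulus hypothesis then pins down $\lim_k m_k=f(x)$ with error controlled by $\medmod(x, r)$. The identity $\alpha=1-\lambda/(n-1)$ enters precisely as the exponent for which the prescribed modulus is matched against the gauge produced by $\Phi$ on small scales.

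\textbf{Cover and sum.} Fix $\varepsilon>0$. Absolute continuity of the Orlicz integral together with $|N|=0$ yields an open $U\supset N$ with $\int_U\norm{Df}^n\log^\lambda(e+\norm{Df})<\varepsilon$. For each $x\in N$, I select $r(x)\in(0,R(x)/2)$ with $B(x, 10r(x))\subset U$ and small enough that the modulus term $\medmod(x,10r(x))^n$ can be absorbed into the Sobolev term (or vice versa in the endpoint $\alpha=0$), using the hypothesized decay rate of $\medmod$. The $5r$-covering theorem extracts a disjoint subfamily $\{B(x_i,r_i)\}$ with $N\subset\bigcup_i B(x_i,5r_i)$. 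Summing the oscillation bound over this family gives
\[
\H^n(f(N)) \;\le\; C\sum_i \diam f(B(x_i,5r_i))^n \;\le\; C\int_U \norm{Df}^n\log^\lambda(e+\norm{Df})\,dy \;<\; C\varepsilon,
\]
the Sobolev contribution collapsing by disjointness and the modulus contribution being absorbed by the choice of $r(x)$. Letting $\varepsilon\to 0$ completes the proof.

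\textbf{Main obstacle.} The heart of the argument is the oscillation estimate, in particular extracting the correct logarithmic weight from the telescoping chain of balls. The exponent relation $\alpha=1-\lambda/(n-1)$ is not accidental: it emerges from balancing the Orlicz weight $\log^\lambda$ against the $n$-fold H\"older summation over dyadic scales, and verifying this matching (in particular, handling the degeneration at the endpoint $\alpha=0$, where the Sobolev gain disappears and the modulus term must carry the estimate) is the main technical challenge. A secondary issue is that the hypothesis is phrased in terms of the median modulus rather than the pointwise modulus, so some care is needed in identifying $\lim_k m_k$ with $f(x)$ at the correct rate.
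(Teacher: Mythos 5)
There is a genuine gap, and it sits exactly where you locate the ``heart of the argument'': the proposed oscillation estimate is not available in this regime, and the surrounding covering scheme inherits the problem. First, for $f$ with $\int|Df|^n\log^\lambda(e+|Df|)\,dy<\infty$ and $\lambda\le n-1$, the map need not be locally bounded, so $\diam f(B(x,r))$ can be infinite; the hypothesis only controls the \emph{median} modulus at points of $N$, which says nothing about the worst half of each ball. More importantly, even after truncating (working with medians, as in Lemma~\ref{l:hoelder}), the chaining over dyadic scales produces a \emph{growing} logarithmic prefactor: Lemma~\ref{l:hoelder} gives $(M-m)^n\le C(1+\log(R/r))^{n-1}\int|Df|^n$, so the energy must be multiplied by $(\log(R/r))^{(n-1)/n}$, and Jensen's inequality with the weight $\log^\lambda$ can at best reduce this to a factor $(\log(R/r))^{(n-1-\lambda)/n}$, which still diverges for $\lambda<n-1$. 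A bound of the form $\diam f(B(x,r))\le C\medmod(x,2r)+C\Phi(x,r)$ with a \emph{decaying} weight $(\log(1/r))^{-(n-1-\lambda)/n}$, valid at every small scale $r$, would essentially prove condition~(N) without any modulus hypothesis at the selected scales, and is false. This is precisely why the paper does not attempt a uniform per-scale estimate: it selects, via the combinatorial \tref{l:elementary}, \emph{infinitely many good scales} $r_k$ at which the mean of $|Df|^n$ over the good set $A_k=\{y\in B(x,r_k):|f(y)-f(x)|\le e^{-k}\}$ is so large that the Orlicz gain $\log^\lambda$ cancels the loss $(1+\log(r_k/r_{k+1}))^{n-1}$; the modulus hypothesis enters only there, through the bound $\log(1/r_j)\le (j/\mu)^{1/\alpha}$ (resp.\ $e^{j/\mu}$), i.e.\ by limiting how fast the radii $r_k$ can collapse. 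The exceptional set where this selection is impossible (where $\omega_f(x,e^{-k-1})\le e^{-k}$ infinitely often) is treated separately by a weak approximate H\"older argument (\tref{preponderantlyLuzin}) using $W^{1,n}$ alone.

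Second, your cover-and-sum step does not close even formally. You cover $N$ by source-side balls and claim the contribution $\sum_i\medmod(x_i,10r_i)^n$ ``can be absorbed.'' But for $\alpha<1$ the gauge $\exp(-n\mu\log^\alpha(1/r))$ is enormously larger than $r^n$, so this sum over a disjoint family covering a null set is in no way small: smallness of $\sum_i r_i^n$ (or of the Orlicz energy on $U$) gives no control of $\sum_i\psi(r_i)^n$, and there is no term to absorb it into. The paper's mechanism is different: the Vitali covering is performed on the \emph{image} side (\tref{criterion}), the disjoint objects are the image balls $B(f(x_j),R_j)$ with $R_j=e^{-k}$ a median value, and each $R_j^n$ is bounded by the Orlicz energy over $A_j\subset f^{-1}(\overline B(f(x_j),R_j))$, so disjointness of the image balls yields essential disjointness of the $A_j$ and the sum collapses onto $\int_G\Phi(|Df|)$, with the modulus never appearing additively. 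To repair your argument you would have to (i) replace the all-scales oscillation bound by a good-scale selection of the type of \tref{l:elementary} (or prove some substitute), and (ii) move the covering to the image side so that only energy terms are summed; at that point you are essentially reconstructing the paper's proof, including its splitting of $N$ according to whether $\omega_f(x,e^{-k-1})\le e^{-k}$ holds infinitely often.
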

\subsection{Key estimates}
\begin{lemma}\label{l:riesz}
Let $u\in W^{1,1}(B(0,R))$. Let $M$ be the median of $u$ in $B(0,R)$
and $m$ be the median of $u$ in $B(0,r)$, where $0<r\le R$. Then
\eqn{Riesz}
$$
|M-m|\le C(n)r^{1-n}\int_{B(0,r)}|Du|\,dy
+C(n)\int_{B(0,R)\setminus B(0,r)}|y|^{1-n}|Du|\,dy.
$$
\end{lemma}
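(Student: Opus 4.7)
The plan is a dyadic chaining argument. The key input is a single-scale comparison: for concentric balls $B_1 = B(0, s) \subset B_2 = B(0, 2s) \subset B(0, R)$ and any medians $m_1, m_2$ of $u$ on $B_1, B_2$ respectively, I claim
$$
|m_1 - m_2| \le C(n) s^{1-n} \int_{B_2} |Du|\, dy.
$$
To see this, start from
$$
|m_1 - m_2| \le \frac{1}{|B_1|} \int_{B_1} |u - m_1|\, dy + \frac{1}{|B_1|} \int_{B_1} |u - m_2|\, dy.
$$
By the $L^1$-minimising property of the median, $m_1$ minimises $t \mapsto \int_{B_1} |u-t|\, dy$, so the first integral is also bounded by $\int_{B_1} |u - m_2|\, dy$. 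Enlarging the integration domain to $B_2$ costs only the dimensional factor $|B_2|/|B_1| = 2^n$, and the classical Poincaré inequality for the median yields $\int_{B_2} |u - m_2|\, dy \le C(n) s \int_{B_2} |Du|\, dy$, so that normalising by $|B_2|^{-1}$ produces the $s^{1-n}$ factor.

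Next, I would set up a telescoping chain. Let $r_k = 2^k r$ for $k = 0, 1, \ldots, K$, where $K$ is the largest integer with $r_K \le R$ (so that $R < 2 r_K$), and let $m_k$ denote a median of $u$ on $B(0, r_k)$. Then $m_0 = m$, and
$$
|M - m| \le |M - m_K| + \sum_{k=0}^{K-1} |m_{k+1} - m_k|.
$$
Each summand is bounded by the single-scale estimate by $C(n)\, r_{k+1}^{1-n} \int_{B(0, r_{k+1})} |Du|\, dy$; the boundary term $|M - m_K|$ is handled in exactly the same way using $R/r_K \le 2$.

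The final task is to reassemble this into the two integrals appearing in the statement. Split $B(0, r_{k+1})$ as $B(0, r)$ together with the annuli $A_j := B(0, r_{j+1}) \setminus B(0, r_j)$ for $j = 0, \ldots, k$. The terms over $B(0, r)$ sum to at most $\bigl(\sum_{k=0}^{K-1} r_{k+1}^{1-n}\bigr) \int_{B(0, r)} |Du|\, dy \le C(n)\, r^{1-n} \int_{B(0, r)} |Du|\, dy$ by geometric summation (convergent since $1 - n < 0$). For the annular contributions, swap the order of summation and use $|y|^{1-n} \ge r_{j+1}^{1-n}$ on $A_j$:
$$
\sum_{j=0}^{K-1} \int_{A_j} |Du|\, dy \sum_{k=j}^{K-1} r_{k+1}^{1-n} \le C(n) \sum_{j=0}^{K-1} r_{j+1}^{1-n} \int_{A_j} |Du|\, dy \le C(n) \int_{B(0, R) \setminus B(0, r)} |y|^{1-n} |Du|\, dy.
$$
Adding the two contributions yields the claim.

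The main obstacle is the single-scale median comparison from the first paragraph, but this is routine once one recalls the $L^1$-minimising characterisation of the median together with the standard $L^1$ Poincaré inequality (the latter gives a constant-free comparison with any fixed constant, in particular with a median, after possibly adjusting the constant). Everything else is careful dyadic bookkeeping that relies only on the geometric summability $\sum_k 2^{-k(n-1)} < \infty$.
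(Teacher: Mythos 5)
Your proof is correct and follows essentially the same route as the paper's: a dyadic chain of concentric balls, a single-scale comparison of consecutive medians via the median's $L^1$ properties combined with the Poincar\'e inequality, and a swap of the order of summation that turns the geometric sums into the weight $|y|^{1-n}$ on the annuli. The only cosmetic difference is that you justify the single-scale step by the $L^1$-minimising property of the median, while the paper uses the half-measure inequality $\tfrac{|B|}{2}|m-c|\le\int_B|u-c|\,dy$ with $c=u_B$; these are interchangeable.
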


\begin{proof}
We find $j\in\en$ such that $2^{j-1}\le R/r<2^{j}$ and
consider a chain of balls $B(0,r_k)$,
$k=0,\dots,j$, where $r_0=R$ and
$r_k=r2^{j-k}$ for $k=1,\dots,j$. Let $m_k$ denote the median of $u$
in $B(0,r_k)$. For each $k=0,\dots,j-1$ and $c\in\er$ we have
$$
\aligned
\tfrac{|B(0,r_{k+1})|}2\;|m_k-m_{k+1}|
&\le \tfrac{|B(0,r_{k})|}2\;|m_k-c|+
\tfrac{|B(0,r_{k+1})|}2\;|m_{k+1}-c|
\\&\le \int_{B(0,r_k)}|u-c|\, dy.
\endaligned
$$
Then we apply the Poincar\'e inequality with the choice $c=u_{B(0,r_k)}$
and obtain
$$
\aligned
|m_k-m_{k+1}|&\le Cr_k^{-n}\int_{B(0,r_k)}|u-c|\, dy\le
Cr_k^{1-n}\int_{B(0,r_k)}|Du|\, dy
\\&=
Cr_k^{1-n}\Bigl(\sum_{i=k}^{j-1}\int_{B(0,r_i)\setminus B(0,r_{i+1})}|Du|\, dy
+\int_{B(0,r_j)}|Du|\, dy\Bigr).
\endaligned
$$
Summing over $k$ we obtain
$$
\aligned
|M-m|&\le
C\sum_{k=0}^{j}
\Bigl(\sum_{i=k}^{j-1}r_k^{1-n}\int_{B(0,r_{i})\setminus B(0,r_{i+1})}|Du|\,dy
+r_k^{1-n}\int_{B(0,r_j)}|Du|\, dy\Bigr)
\\&=
C\sum_{i=0}^{j-1}\sum_{k=0}^{i}
r_k^{1-n}\int_{B(0,r_{i})\setminus B(0,r_{i+1})}|Du|\,dy+
C\sum_{k=0}^jr_k^{1-n}\int_{B(0,r_j)}|Du|\, dy.
\endaligned
$$
Since
$$
\sum_{k=0}^ir_k^{1-n}\le C|y|^{1-n},\qquad y\in B(0,r_{i})\setminus B(0,r_{i+1}),\;i=0,\dots,j-1
$$
and
$$
\sum_{k=0}^jr_k^{1-n}\le Cr^{1-n},
$$
the estimate follows.
\end{proof}

\begin{lemma}\label{l:hoelder}
Let $f\in W^{1,n}(B(0,R);\rd)$. Let $M$ be the median of $|f|$ in $B(0,R)$
and $m$ be the median of $|f|$ in $B(0,r)$, where $0<r\le R$.
Suppose $m\le M$. Then
\begin{equation*}\label{Pseudo RR}
(M-m)^n\le C(n)(1+\log(R/r))^{n-1}\int_{B(0,R)\cap \{|f|<M\}}|Df|^n\,dy.
\end{equation*}
\end{lemma}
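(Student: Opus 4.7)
The plan is to reduce to Lemma~\ref{l:riesz} by truncating. Assume $m<M$ (otherwise the conclusion is trivial). Set $u=\min(|f|,M)$; then $u\in W^{1,n}(B(0,R))$ with $|Du|\le |Df|\chi_{\{|f|<M\}}$ a.e.\ by the standard chain rule for truncations together with $|D|f||\le |Df|$.

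Next I would verify that the medians of $u$ on $B(0,R)$ and $B(0,r)$ are exactly $M$ and $m$. For $B(0,R)$: since $u\le M$ and $u=M$ on $\{|f|\ge M\}$, a set of measure at least $\tfrac12|B(0,R)|$ by definition of the median $M$ of $|f|$, the value $M$ is a median of $u$. For $B(0,r)$: whenever $s<M$ one has $\{u>s\}=\{|f|>s\}$ (on $\{|f|\ge M\}$ both hold; on $\{|f|<M\}$, $u=|f|$), so the distribution functions of $u$ and $|f|$ agree below $M$, and the median $m<M$ of $|f|$ is therefore also a median of $u$.

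Now apply Lemma~\ref{l:riesz} to $u$: writing $g=|Df|\chi_{\{|f|<M\}}$, the lemma gives
$$
M-m\le C(n)\int_{B(0,R)} K(y)\,g(y)\,dy,\qquad K(y)=\begin{cases} r^{1-n},& |y|\le r,\\ |y|^{1-n},& r<|y|\le R.\end{cases}
$$
I would then apply Hölder's inequality with exponents $n/(n-1)$ and $n$, reducing the estimate to a computation of $\|K\|_{L^{n/(n-1)}(B(0,R))}$. A direct calculation splits this into a contribution from $B(0,r)$ (which is bounded by a dimensional constant) and an annular contribution $\int_r^R \omega_{n-1}s^{-1}\,ds=\omega_{n-1}\log(R/r)$, giving
$$
\int_{B(0,R)} K^{n/(n-1)}\,dy\le C(n)\bigl(1+\log(R/r)\bigr).
$$
Raising Hölder's bound to the $n$-th power delivers the claimed inequality.

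The only nontrivial step is the median identification for $u$; once that is in place, this is a textbook Riesz-potential/Hölder argument, and the logarithmic factor is precisely the $L^{n/(n-1)}$-norm of the truncated Riesz kernel on an annulus.
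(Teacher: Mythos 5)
Your proof is correct and follows essentially the same route as the paper: truncate at $M$ via $u=\min\{|f|,M\}$, apply Lemma~\ref{l:riesz}, and use H\"older's inequality with the $L^{n/(n-1)}$-norm of the kernel $\max\{r,|y|\}^{1-n}$, which produces the logarithmic factor. Your explicit verification that $M$ and $m$ remain medians of the truncated function is a detail the paper leaves implicit, and it is carried out correctly.
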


\begin{proof} We apply Lemma \ref{l:riesz} to $u=\min\{|f|,M\}$ and obtain
$$
M-m\le C\int_{B(0,R)}v^{1-n}|Du|\,dy\le C\int_{B(0,R)\cap \{|f|<M\}}v^{1-n}|Df|\,dy,
$$
where $v=\max\{r,|x|\}$. By the H\"older inequality we conclude that
$$
M-m\le C\Bigl(\int_{B(0,R)}v^{-n}\,dy\Bigr)^\frac{n-1}{n}
\Bigl(\int_{B(0,R)\cap \{|f|<M\}}|Df|^n\,dy\Bigr)^{\frac1n}.
$$
Now, it remains to notice that
$$
\int_{B(0,R)}v^{-n}\,dy\le C(1+\log(R/r)).
$$
\end{proof}

\begin{lemma}\tlabel{l:elementary}
Let $q$ be a positive integer, $0\leq\alpha<1$, and $\tau>0$. Then there exists
a constant $C=C(\tau,\alpha)>0$
with the following property:
Let $\{a_j\}_{j=q}^{\infty}$ be a nondecreasing sequence of positive real numbers
and
$$
b_j=
\begin{cases}
\tau j^{1/\alpha},&\alpha>0,\\
e^{\tau j},& \alpha=0.
\end{cases}
$$
Suppose
\eqn{borders}
$$
j+1\le a_j\le b_j,\qquad j=q,\,q{+}1,\,q{+}2,\dots.
$$
Then there exists an integer $k\ge q$ such that
\eqn{recurrent}
$$
 a_{k+1}-a_k\le C(a_k-k)^{1-\alpha}.
$$
\end{lemma}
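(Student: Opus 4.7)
The plan is to argue by contradiction. Assume that for every $k \ge q$,
\[
a_{k+1} - a_k > C(a_k - k)^{1-\alpha},
\]
where $C = C(\tau,\alpha)$ is a constant to be fixed at the end. Setting $d_k := a_k - k$, the lower bound in \eqref{borders} gives $d_k \ge 1$, and substitution yields
\[
d_{k+1} = d_k - 1 + (a_{k+1}-a_k) > d_k - 1 + C d_k^{1-\alpha} \ge d_k + (C-1) d_k^{1-\alpha},
\]
using $d_k^{1-\alpha} \ge 1$. The goal is to show that this forces $d_k$ to eventually exceed $b_k - k$, contradicting the upper bound in \eqref{borders}.

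The case $\alpha = 0$ is elementary: the recurrence reads $d_{k+1} > (C+1)d_k - 1 \ge C d_k$ for $d_k \ge 1$, so $d_k \ge C^{k-q}$ by induction. Any choice $C > e^{\tau}$ then produces $d_k > e^{\tau k} = b_k$ for $k$ large, a contradiction.

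The main effort is the case $\alpha \in (0,1)$, where the increment $(C-1)d_k^{1-\alpha}$ is sublinear; the obstacle is to convert this into polynomial growth of the correct exponent $1/\alpha$. The device is to monitor $F(d_k) := d_k^{\alpha}$ and show a uniform additive gain $F(d_{k+1}) - F(d_k) \ge \eta$. Fix a threshold $L > (\tau^{\alpha}+1)^{1/\alpha}$ and split cases. If $d_{k+1} \le L d_k$, then since $F' = \alpha t^{\alpha-1}$ is decreasing,
\[
F(d_{k+1}) - F(d_k) \ge \alpha d_{k+1}^{\alpha-1}(d_{k+1}-d_k) \ge \alpha L^{\alpha-1} d_k^{\alpha-1} \cdot (C-1) d_k^{1-\alpha} = \alpha(C-1)L^{\alpha-1}.
\]
If instead $d_{k+1} > L d_k$, then $F(d_{k+1}) \ge L^{\alpha} F(d_k) \ge F(d_k) + (L^{\alpha}-1)$ because $F(d_k) \ge 1$. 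Hence in either case
\[
F(d_{k+1}) - F(d_k) \ge \eta := \min\bigl\{\alpha(C-1)L^{\alpha-1},\; L^{\alpha}-1\bigr\}.
\]

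Summing these gains from $q$ to $k-1$ produces $d_k^{\alpha} \ge 1 + \eta(k-q)$. The choice of $L$ already guarantees $L^{\alpha}-1 > \tau^{\alpha}$; choosing $C$ large enough that $\alpha(C-1)L^{\alpha-1} > \tau^{\alpha}$ as well makes $\eta > \tau^{\alpha}$. Consequently $d_k \ge (\eta(k-q))^{1/\alpha}$ eventually exceeds $\tau k^{1/\alpha} \ge b_k - k$, so that $a_k = d_k + k > b_k$, contradicting \eqref{borders}. The thresholds $L$ and $C$ depend only on $\tau$ and $\alpha$, as required.
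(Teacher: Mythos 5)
Your proof is correct, and it follows the same overall strategy as the paper (contradiction: if \eqref{recurrent} fails for every $k\ge q$, the sequence is forced to outgrow the bound $b_j$), but the mechanism you use to extract the growth is genuinely different. The paper guesses an explicit inductive lower bound, $a_k\ge (\tau+1)(k-q)^{1/\alpha}+k+1$ for $\alpha>0$ (and $a_k\ge k+e^{(\tau+1)(k-q)}$ with $C=e^{\tau+1}$ for $\alpha=0$), and the induction step reduces to checking that a certain supremum
$\sup_{x\ge0}\bigl(\nu(x+1)^{1/\alpha}-\nu x^{1/\alpha}+1\bigr)/\bigl(\nu x^{1/\alpha}+1\bigr)^{1-\alpha}$
is finite, which is done via the mean value theorem and a limit computation. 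You instead pass to $d_k=a_k-k$ and linearize the sublinear recursion $d_{k+1}\ge d_k+(C-1)d_k^{1-\alpha}$ by tracking $d_k^{\alpha}$, proving a uniform per-step gain $\eta$; the two-case split (moderate jump, where concavity and the derivative at the right endpoint give $\alpha(C-1)L^{\alpha-1}$, versus huge jump, where the multiplicative factor $L^{\alpha}$ alone gives $L^{\alpha}-1$) is exactly the extra idea needed, since the endpoint-derivative estimate degrades when $d_{k+1}/d_k$ is unbounded. What your route buys is that you need not guess the precise form of the induction hypothesis, the dependence of $C$ on $\tau$ and $\alpha$ is completely explicit through $L$ and $\eta$, and the "easily seen" finiteness check of the paper is replaced by elementary inequalities; the paper's route is shorter once the ansatz is written down and stays in the original variable $a_k$. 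Both treat $\alpha=0$ separately by an exponential iteration, essentially identically.
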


\begin{proof}
Suppose that \eqref{recurrent} fails for a fixed $C$. Write $\nu=\tau+1$.

First, let $0<\alpha<1$.
Towards a contradiction with \eqref{borders}, it suffices to verify
\begin{equation}\label{InequalityToProve}
a_k\geq \nu(k-q)^{1/\alpha}+k+1,\quad k=q,\,q+1,\,q+2,\ldots
\end{equation}
Assume \eqref{InequalityToProve} is true for some $k$; it clearly holds for $k=q$. Then
\begin{equation}\label{ind}
a_{k+1}\geq C(a_k-k)^{1-\alpha}+a_k\geq C(\nu(k-q)^\frac{1}{\alpha}+1)^{1-\alpha}+\nu(k-q)^{1/\alpha}+k+1.
\end{equation}
We see that we succeed in proving \eqref{InequalityToProve} if
\begin{equation*} C(\nu(k-q)^\frac{1}{\alpha}+1)^{1-\alpha}+\nu(k-q)^{1/\alpha}+k+1
\geq \nu(k+1-q)^{1/\alpha}+k+2.
\end{equation*}
We are fine if there is $0<C<\infty$ such that
\begin{equation*}
C\geq \sup_{x\geq q} \frac{\nu(x+1-q)^{1/\alpha}-\nu(x-q)^{1/\alpha}+1}{(\nu(x-q)^\frac{1}{\alpha}+1)^{1-\alpha}}
= \sup_{x\geq 0} \frac{\nu(x+1)^{1/\alpha}-\nu x^{1/\alpha}+1}{(\nu x^{1/\alpha}+1)^{1-\alpha}}.
\end{equation*}
This requires the supremum to be finite. Applying the mean value theorem in the nominator and arguing that we take the supremum of a continuous function, we only need to show that the limit as $x$ tends to infinity is finite. This can easily be seen.

Now, suppose $\alpha=0$.
We set $C=e^{\nu}$ and
head for
\begin{equation*}
a_k\geq k+e^{\nu(k-q)}
\end{equation*}
instead of \eqref{InequalityToProve}.
The induction step is
$$
a_{k+1}\ge k+e^{\nu(k-q)}+C(e^{\nu(k-q)})\ge k+1+e^{\nu}e^{\nu(k-q)}
= k+1+e^{\nu(k+1-q)}.
$$
As in the previous case, we obtain a contradiction with
\eqref{borders} for $k$ big enough.
\end{proof}

\subsection{A criterion for Luzin's condition~(N)}
We provide a criterion with a standard proof
for Luzin's condition~(N) that we will apply subsequently.

\begin{proposition}\tlabel{criterion} Assume that $\Omega\subset \field{R}^n$ is open and $\Phi\colon \ropen{0}{\infty}\to [0,\infty]$. We suppose that $f\colon \Omega\to \field{R}^d$ is in $W^{1,1}_\textnormal{loc}(\Omega;\field{R}^d)$ and such that
\begin{equation}\label{integrabilityDifferential}
\int_K \Phi(\norm{Df})\, dy<\infty
\end{equation}
for each compact set $K\subset \Omega$ . Assume that $S\subset \Omega$ is such that for each point $x\in S$ there exists a natural number $L(x)$, two sequences $(r_k(x))_k$ and $(R_k(x))_k$, where the first one converges to zero.
We further stipulate the existence of a sequence $(A_k(x))_k$ of sets $A_k(x)$ such that $A_k(x)\subset B(x,r_k)$, $f(A_k(x))\subset B(f(x),R_k(x))$, and
\begin{align}\label{CriterionRBound}
R_k(x)^n\leq L(x)\int_{A_k(x)} \Phi(\norm{Df})\, dy,\quad k\in \field{N}.
\end{align}
Then $f$ satisfies Luzin's condition~(N) in $S$.
\end{proposition}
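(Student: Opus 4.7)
The plan is to run the standard Vitali-type covering argument, but carried out in the \emph{target} space $\field{R}^d$. Let $N\subset S$ be Lebesgue null; I aim to show $\H^n(f(N))=0$. By $\sigma$-subadditivity of $\H^n$, by partitioning $S$ along the level sets of the $\field{N}$-valued function $L$, and by exhausting $\Omega$ by compact subsets, I reduce immediately to the case where $L(x)\le L$ for all $x\in N$ and $\overline N$ is a compact subset of $\Omega$.

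Fix $\epsilon>0$ and $\delta>0$. Since $\Phi(\norm{Df})\in L^1_\loc(\Omega)$ by \eqref{integrabilityDifferential} and $|N|=0$, absolute continuity of the integral yields an open set $U$ with $N\subset U\Subset\Omega$ and $\int_U\Phi(\norm{Df})\,dy<\epsilon$. For each $x\in N$, since $r_k(x)\to 0$ one has $|A_k(x)|\le|B(x,r_k(x))|\to 0$, and \eqref{CriterionRBound} combined with absolute continuity of the integral then forces $R_k(x)\to 0$. Hence I can choose $k(x)$ so large that $B(x,r_{k(x)}(x))\subset U$ and $R_{k(x)}(x)\le \delta/10$. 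Write $A_x:=A_{k(x)}(x)$ and $R_x:=R_{k(x)}(x)$.

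The balls $\{B(f(x),R_x)\}_{x\in N}$ trivially cover $f(N)$ (each $f(x)$ is a center) and have uniformly bounded radii, so the $5r$-covering lemma in $\field{R}^d$ supplies a countable \emph{disjoint} subfamily $\{B(f(x_i),R_{x_i})\}_i$ whose $5$-fold enlargements still cover $f(N)$. The only non-routine point is to observe that disjointness in the target transfers to the preimage pieces: if $z\in A_{x_i}\cap A_{x_j}$ with $i\ne j$, then $f(z)$ would lie in both $B(f(x_i),R_{x_i})$ and $B(f(x_j),R_{x_j})$, a contradiction. Hence the $A_{x_i}$ are pairwise disjoint subsets of $U$, and \eqref{CriterionRBound} gives
\begin{equation*}
\sum_i R_{x_i}^n\;\le\;L\sum_i\int_{A_{x_i}}\Phi(\norm{Df})\,dy\;\le\;L\int_U\Phi(\norm{Df})\,dy\;<\;L\epsilon.
\end{equation*}

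Since the diameters of the covering balls $B(f(x_i),5R_{x_i})$ are at most $\delta$, this yields $\H^n_\delta(f(N))\le\sum_i(10R_{x_i})^n\le 10^nL\epsilon$. Letting first $\epsilon\to 0$ and then $\delta\to 0$ gives $\H^n(f(N))=0$, as desired. The main (and essentially only) conceptual point is the disjointness-transfer observation; everything else is routine covering machinery together with absolute continuity of the Sobolev--Orlicz integral.
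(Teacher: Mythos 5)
Your proof is correct and follows essentially the same route as the paper: cover $f(N)$ by the target balls $B(f(x),R_k(x))$, extract a disjoint subfamily, transfer the disjointness to the sets $A_k(x)$ via $f(A_k(x))\subset B(f(x),R_k(x))$, and conclude from \eqref{CriterionRBound} together with absolute continuity of the integral of $\Phi(\norm{Df})$ over a small open neighborhood of the null set. The only difference is technical rather than conceptual: you use the $5r$-covering lemma and the pre-measure $\H^n_\delta$ where the paper invokes the Vitali covering theorem on the image side, and you make explicit two points the paper leaves implicit, namely that $R_k(x)\to 0$ (needed for the covering to be fine) and the disjointness transfer from image balls to the sets $A_k(x)$.
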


\begin{proof}
We may assume that $S$ is relatively compact and
$x\mapsto L(x)$ is bounded on $S$. Let $G\subset\Omega$ be an open set that contains $S$.
Using Vitali covering theorem on the image side, see e.g.\ \cite[Theorem~1.10 (a)]{FalconerGeometry}, we find (infinite or finite, but of
the same length) sequences $(B(x_j,r_j))_j$ of balls in $G$,
$(A_j)_j$ of subsets of $B(x_j,r_j)$,  and $(B(y_j,R_j))_j$ of balls in
$\rn$ such that $y_j=f(x_j)$, the balls
$B(y_j,R_j)$ are pairwise disjoint, cover $f(S)$ up to a set of measure zero,
and for each $j$ there exist $k\in\field{N}$ such
that $r_j=r_k(x_j)$, $R_j=R_k(x_j)$ and $A_j=A_k(x_j)$. Then
$$
\H^{n}(f(S))\le C\sum_{j}R_j^n\le C\sum_j\int_{A_j}\Phi(\norm{Df})\,dy\le C\int_G\Phi(\norm{Df})\,dy.
$$
By specifying the choice of $G$, the integral on the right can be made arbitrarily small, and thus $\H^n(f(S))=0$.
\end{proof}

\subsection{Luzin's condition~(N) for the weakly approximately H\"older continuous part}
Here we take care of the case $\alpha=1$ in \tref{t:main}. As a subcase, we will also need it for $\alpha<1$. In \cite{MalyTheArea}, approximately H\"older continuous mappings were considered. Here, we need as slight generalization.

\begin{definition}[$(\zeta,\vartheta)$\nobreakdash-weakly approximately
H\"older continuous]\tlabel{HoelderDef}
Let $\Omega\subset \field{R}^n$ be open and $(\rho_k)_k$ a
decreasing sequence. For $\zeta,\, \vartheta\in \lopen{0}{1}$, we say that a
mapping $f\colon \Omega\to \field{R}^d$ is
\emph{$(\zeta,\vartheta)$\nobreakdash-weakly approximately H\"older continuous
at $x\in \Omega$ with respect to $(\rho_k)_k$}, if there is a sequence
$(H_k)_k$ of measurable sets $H_k\subset B_k:=B(x,\rho_k)$ such that
\begin{equation*}
\limsup_{k\to \infty} \sup_{y\in H_k} \frac{\norm{f(y)-f(x)}}{\rho_k^\zeta}<\infty,
\end{equation*}
and
\begin{equation*}
\limsup_{k\to \infty} \frac{|B_k\cap H_k|}{|B_k|}\geq \vartheta.
\end{equation*}
We say that $f$ is \emph{weakly approximately H\"older continuous at $x$ with
respect to $(\rho_k)_k$} if there exist $\zeta, \vartheta\in \ropen{0}{1}$ such that $f$ is $(\zeta,\vartheta)$\nobreakdash-weakly approximately H\"older continuous at $x\in \Omega$ with respect to $(\rho_k)_k$.
\end{definition}

\begin{theorem}\tlabel{preponderantlyLuzin}
Suppose that $\Omega\subset \field{R}^n$ is open and $f$ is a mapping in $W^{1,n}_\textnormal{loc}(\Omega,\field{R}^d)$.
Assume that for every point $x\in H$ there exist $\zeta(x),\, \vartheta(x)$ and
a sequence $(\rho_k(x))_k$
converging to zero such that
$f$ is $(\zeta(x),\, \vartheta(x))$ weakly approximately H\"older continuous at $x$ with respect to $(\rho_k(x))_k$.
Then $f$ satisfies Luzin's condition~(N) in $H$.
\end{theorem}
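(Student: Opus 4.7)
The plan is to verify the hypotheses of \tref{criterion} with $\Phi(t)=t^n$. For a Lebesgue null set $N\subset H$ and $x\in N$, the weak approximate H\"older continuity of $f$ at $x$ provides parameters $\zeta=\zeta(x)$, $\vartheta=\vartheta(x)$, a sequence of radii $(\rho_k)=(\rho_k(x))$ tending to zero, and sets $H_k\subset B_k:=B(x,\rho_k)$. After passing to a subsequence I may assume $|H_k|\ge(\vartheta/2)|B_k|$ and $\sup_{y\in H_k}|f(y)-f(x)|\le C_0\rho_k^{\zeta}$ for every $k$. The goal is to supply sequences $(r_k)$, $(R_k)$ with $r_k\to 0$ and sets $A_k\subset B(x,r_k)$ verifying $f(A_k)\subset B(f(x),R_k)$ and $R_k^n\le L(x)\int_{A_k}|Df|^n\,dy$.

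The naive choice $r_k=\rho_k$, $A_k=H_k$, $R_k=C_0\rho_k^{\zeta}$ secures the inclusion but usually not the energy estimate, since $f$ may be close to constant on $H_k$ while oscillating on $B_k\setminus H_k$. The remedy is to upgrade the H\"older-type bound on $H_k$ to a quantile bound for $u(y):=|f(y)-f(x)|$ on the whole ball $B_k$, and then invoke a Sobolev--Poincar\'e / median-comparison argument in the spirit of Lemma~\ref{l:hoelder}, applied to a suitable truncation of $u$. Because $u\le C_0\rho_k^{\zeta}$ on a set of measure at least $(\vartheta/2)|B_k|$, the $(1-\vartheta/2)$-quantile of $u$ on $B_k$ is $\le C_0\rho_k^{\zeta}$, and this lets me control the actual oscillation of $u$ on $B_k$ up to a term of order $\bigl(\int_{B_k}|Df|^n\,dy\bigr)^{1/n}$.

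The main obstacle will be that the residual H\"older-type term $\rho_k^{\zeta n}$ on the right-hand side need not itself be dominated by the gradient energy at an arbitrary $x$, so one cannot simply absorb it. I plan to resolve this through a dichotomy along $(\rho_k)$: for each $k$, either $\int_{B_k}|Df|^n\,dy\ge\rho_k^{\zeta n}$, in which case $A_k=B_k$ with $R_k$ a constant multiple of $\rho_k^{\zeta}$ and an appropriately chosen $L(x)$ works; or the opposite inequality holds, in which case the Sobolev--Poincar\'e inequality forces the oscillation of $f$ on $B_k$ to be strictly smaller than $\rho_k^{\zeta}$, so the energy estimate $R_k^n\le L(x)\int_{B_k}|Df|^n\,dy$ becomes automatic for the true (smaller) $R_k$. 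Extracting a subsequence on which one of the two alternatives holds for every $k$ then gives admissible data $(r_k,R_k,A_k)$, and \tref{criterion} delivers $\H^n(f(N))=0$, i.e.\ Luzin's condition~(N) on $H$.
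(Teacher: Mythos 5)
There is a genuine gap: neither branch of your dichotomy produces data admissible for \tref{criterion}. In the branch $\int_{B_k}|Df|^n\,dy\ge\rho_k^{\zeta n}$ you take $A_k=B_k$ and $R_k$ a multiple of $\rho_k^{\zeta}$, but the criterion requires $f(A_k)\subset B(f(x),R_k)$, and the hypothesis controls $f$ only on $H_k$; on $B_k\setminus H_k$ a $W^{1,n}$ map is completely uncontrolled (it need not even be bounded), so the inclusion fails. Replacing $A_k$ by $H_k$ restores the inclusion but then the needed lower bound on $\int_{A_k}|Df|^n$ is not implied, since the gradient mass may sit on $B_k\setminus H_k$. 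In the branch $\int_{B_k}|Df|^n\,dy<\rho_k^{\zeta n}$, the assertion that Sobolev--Poincar\'e forces the oscillation of $f$ on $B_k$ to be smaller than $\rho_k^{\zeta}$ is false at the critical exponent: $W^{1,n}$ gives no sup bound, and even the median estimates of Lemmas \ref{l:riesz} and \ref{l:hoelder} only control the \emph{difference} of medians between two scales, with a factor $(1+\log(R/r))^{(n-1)/n}$ that blows up as the inner scale shrinks; they never bound $\medmod(x,\rho_k)$ itself by the energy at scale $\rho_k$. Moreover, the criterion requires $R_k^n\le L\int_{A_k}|Df|^n\,dy$ with the integral taken over the set $A_k$ whose image lies in $B(f(x),R_k)$ (this is what makes the sets essentially disjoint in the Vitali argument), so an estimate of the form $R_k^n\le L\int_{B_k}|Df|^n\,dy$ would not suffice in any case. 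Your quantile observation (that $u\le C_0\rho_k^{\zeta}$ on a set of measure $\ge(\vartheta/2)|B_k|$ bounds a quantile of $u$ on $B_k$) is in the right spirit and corresponds to the paper's remark about adjusting the definition of $\medmod$ when $\vartheta<1/2$, but it is only the starting point.

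What is missing is the mechanism that converts increment control into value control. The paper sets $\tilde r_k=e^{-k}$ and $\tilde R_k=\medmod(x,\tilde r_k)$ and applies Lemma \ref{l:hoelder} at the bounded ratio $\tilde r_k/\tilde r_{k+1}=e$, obtaining $\tilde R_k-\tilde R_{k+1}\le C\bigl(\int_{\tilde A_k}|Df|^n\,dy\bigr)^{1/n}$ with $\tilde A_k$ the sublevel set $\{y\in B(x,\tilde r_k):\ |f(y)-f(x)|\le\tilde R_k\}$, whose image does lie in the required ball. To feed \tref{criterion} one then needs $\tilde R_k\le C(x)(\tilde R_k-\tilde R_{k+1})$ for infinitely many $k$, and the heart of the proof is the iteration showing this cannot fail for all large $k$: failure would give $\tilde R_{k+1}\ge(1-1/C(x))\tilde R_k$, hence a geometric decay rate which, for $C(x)$ chosen large depending on $\zeta(x)$, is slower than $e^{-\zeta k}=\tilde r_k^{\zeta}$, contradicting the bound $\medmod(x,\rho)\le C_0\rho^{\zeta}$ that the weak approximate H\"older hypothesis yields along scales comparable to $\rho_k\to0$. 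This increment-versus-value argument has no counterpart in your dichotomy on the size of $\int_{B_k}|Df|^n\,dy$, and without it the proof does not close.
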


\begin{proof}
For simplicity assume that $\vartheta(x)\ge 1/2$, otherwise we would be forced
to alter the definition of the modulus of continuity.
Consider $\tilde r_k(x)=e^{-k}$ and \mbox{$\tilde
R_k(x)=\omega_f(x,\tilde r_k)$}. Then Lemma \ref{l:hoelder} yields
sets $\tilde A_k\subset B(x,\tilde r_k)$ such that
$$
\tilde R_k-\tilde R_{k+1}\le C\Bigl(\int_{\tilde A_k}|Df|^n\,dy\Bigr)^{1/n}.
$$
In order to apply \tref{criterion} for a subsequence
$(r_k,R_k,A_k)_k$ of $(\tilde r_k,\tilde R_k,\tilde A_k)_k$,
we need the inequality
$$
\tilde R_k\le C(x)(\tilde R_k-\tilde R_{k+1})
$$
to hold for infinitely many $k$. However, if it fails for $k\ge k_0$,
then by iteration we obtain a contradiction with the
weak approximate H\"older continuity for an appropriate choice of
$C(x)$ depending on $\zeta(x)$.
\end{proof}

\subsection{Proof of \stref{t:main}}
We split the set $N$ into two parts. The result then follows from \tref{Holder part} and \tref{Rest Part}.
\begin{proposition}\tlabel{Holder part}
Suppose the same assumptions as in \tref{t:main}. Then $f$ satisfies Luzin's condition~(N) in
\begin{equation*}
S=\bigl\{x\in \Omega:\; \omega_f(x,e^{-k-1})\leq e^{-k}\; \text{for infinitely many $k$}\bigr\}.
\end{equation*}
\end{proposition}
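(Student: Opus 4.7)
The plan is to identify each $x\in S$ as a point of weak approximate H\"older continuity in the sense of Definition \tref{HoelderDef} and then apply Theorem \tref{preponderantlyLuzin}. First observe that the integrability hypothesis, together with $\lambda\geq 0$, yields $|Df|^n\leq |Df|^n\log^{\lambda}(e+|Df|)$ pointwise, so $|Df|\in L^n_{\loc}(\Omega)$; combined with $f\in W^{1,1}_{\loc}(\Omega;\field{R}^d)$ this gives $f\in W^{1,n}_{\loc}(\Omega;\field{R}^d)$, and the structural hypothesis of Theorem \tref{preponderantlyLuzin} is met.

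To verify the continuity property at a fixed $x\in S$, enumerate as $k_1<k_2<\cdots$ the integers $k$ for which $\omega_f(x,e^{-k-1})\leq e^{-k}$, and set $\rho_j:=e^{-k_j-1}$, obtaining a decreasing sequence tending to zero. Unfolding the definition of the median modulus of continuity provides, for each $j$, a measurable set
$$
H_j:=\{y\in B_j:|f(y)-f(x)|\leq e^{-k_j}\}\subset B_j:=B(x,\rho_j)
$$
with $|H_j|\geq\tfrac12|B_j|$ and $|f(y)-f(x)|\leq e^{-k_j}=e\rho_j$ for every $y\in H_j$. Consequently, for any fixed $\zeta\in(0,1)$,
$$
\sup_{y\in H_j}\frac{|f(y)-f(x)|}{\rho_j^{\zeta}}\leq e\,\rho_j^{1-\zeta}\longrightarrow 0\quad\text{as }j\to\infty,
$$
while $|B_j\cap H_j|/|B_j|\geq\tfrac12$ throughout. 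This exhibits $f$ as $(\zeta,\tfrac12)$-weakly approximately H\"older continuous at $x$ with respect to $(\rho_j)_j$.

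Since $x\in S$ was arbitrary, Theorem \tref{preponderantlyLuzin} delivers Luzin's condition~(N) for $f$ on $S$. I do not anticipate any serious obstacle: the argument is essentially an unfolding of definitions. The one point worth flagging is that the hypothesis $\omega_f(x,e^{-k-1})\leq e^{-k}$ in fact provides \emph{Lipschitz}-strength control (oscillation comparable to radius) on a set of relative measure at least $1/2$, which is far more than Definition \tref{HoelderDef} demands for any exponent $\zeta<1$; this slack is exactly what permits $\zeta$ to be chosen freely, and it is also why this proposition handles the "easy" part of $N$, leaving the more delicate analysis for its complement.
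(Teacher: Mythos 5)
Your proposal is correct and follows essentially the same route as the paper: unfold the median modulus bound to get sets $H_j\subset B(x,\rho_j)$ of relative measure at least $\tfrac12$ on which the oscillation is at most $e\rho_j$, and then invoke \tref{preponderantlyLuzin}. The only (harmless) differences are that you take $\zeta\in(0,1)$ where the paper takes $\zeta=1$, and that you explicitly record the step $f\in W^{1,n}_{\loc}$ from the Orlicz integrability, which the paper leaves implicit.
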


\begin{proof}
Let $N\subset S$ be a set of measure zero.
For $x\in N$, we define $r_k:=e^{-k-1}$ and $R_k:=e^{-k}$.

Our goal is to show that $f$ is weakly approximately H\"older continuous at
$x$ (for $\zeta=1$, $\vartheta=1/2$)
with respect to some subsequence of $(r_k)_k$.  We set
\begin{align*}
H_k&:=\{y\in B(x,r_k):\; \norm{f(y)-f(x)}\leq R_k\}.\\
\end{align*}
Set
$$
J=\{k\in\field{N}\colon \omega_f(x,e^{-k-1})\leq e^{-k} \}.
$$
Then
\begin{equation*}
\limsup_{\substack{k\to
\infty\\k\in J}}
\sup_{y\in H_k} \frac{\norm{f(y)-f(x)}}{r_k}\leq \limsup_{\substack{k\to
\infty\\k\in J}}  \frac{e^{-k}}{e^{-k-1}}<\infty
\end{equation*}
and
$$
\measure{H_k}\geq \frac{1}{2}\measure{B_k},\qquad k\in J.
$$
Since $J$ is infinite, we may apply \tref{preponderantlyLuzin}.
\end{proof}

\begin{proposition}\tlabel{Rest Part}
Assume that the assumptions in \tref{t:main} hold.
Then $f$ satisfies Luzin's condition~(N) in the set
$$
S=\bigl\{x\in \Omega:\; \omega(e^{-k-1})\geq \medmod(x,e^{-k-1})> e^{-k}\; \text{for almost all $k$}\bigr\}.
$$
where
\begin{equation*}
\omega(r)=
\begin{cases}
 \exp\bigl(-\mu\log^{\alpha}\bigl(1/r\bigr)\bigr) ,&\alpha>0,\\
\log^{-\mu}(1/r),&\alpha=0.
\end{cases}
\end{equation*}
\end{proposition}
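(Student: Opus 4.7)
My plan is to apply \tref{criterion} with $r_k(x) := e^{-k}$, $R_k(x) := m_k$ the median of $g_x := |f - f(x)|$ on $B_k := B(x, r_k)$, and $A_k(x) := B_k \cap \{g_x < m_k\}$. Then $|A_k| \geq |B_k|/2$ and $f(A_k) \subset B(f(x), m_k)$; the defining property of $S$ gives $e^{-k+1} < m_k \leq \omega(e^{-k})$ for all but finitely many $k$, so $R_k \to 0$ as needed for the Vitali argument. It remains to show
\begin{equation*}
m_j^n \leq L(x) \int_{A_j} \Phi(|Df|)\,dy, \qquad \Phi(t) := t^n \log^\lambda(e+t),
\end{equation*}
for an infinite sequence of scales $j$.

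\textbf{Step 1 (single-scale estimate).} For $j < k$ with $m_k \leq m_j / c$ ($c > 1$), Lemma~\ref{l:hoelder} applied to $g_x$ yields $(m_j - m_k)^n \leq C(k-j)^{n-1}\int_{A_j}|Df|^n\,dy$. A Young-type split of $|Df|^n$ at threshold $T := \exp((k-j)^{1/(1-\alpha)})$ (replaced by $\exp(k-j)$ when $\alpha = 0$), using $\lambda = (n-1)(1-\alpha)$ to make $(k-j)^{n-1}\log^{-\lambda}(e+T) = O(1)$, gives
\begin{equation*}
(m_j - m_k)^n \leq C(k-j)^{n-1}T^n|B_j| + C\int_{A_j}\Phi(|Df|)\,dy.
\end{equation*}
With $\phi_j := -\log m_j$, the first term is at most $m_j^n/2$ provided $k-j \leq c_0(j-\phi_j)^{1-\alpha}$ for a suitable constant $c_0$; together with $(m_j-m_k)^n \geq (1-1/c)^n m_j^n$, this produces the criterion inequality at scale $j$ with $L(x) = 2C(1-1/c)^{-n}$.

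\textbf{Step 2 (finding good scales via \tref{l:elementary}).} Fix $c := e^{\mu/2}$ so that $\log c < \mu$. Suppose toward a contradiction that beyond some $j_0$ no pair $(j, k)$ meets both conditions of Step~1, and iterate $j_{n+1} := j_n + \lceil c_0(j_n - \phi_{j_n})^{1-\alpha}\rceil$. The failure hypothesis forces $\phi_{j_{n+1}} < \phi_{j_n} + \log c$, and setting $u_n := j_n - \phi_{j_n}$ yields $u_{n+1} \geq u_n + c_0 u_n^{1-\alpha} - \log c$. Hence $u_n \gtrsim n^{1/\alpha}$ (respectively $e^{\tau n}$ for $\alpha = 0$), so $j_n \gtrsim n^{1/\alpha}$ (resp.\ $e^{\tau n}$) and $\phi_{j_n} \geq \mu j_n^\alpha \gtrsim \mu n$. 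Combined with $\phi_{j_n} < \phi_{j_0} + n\log c = \phi_{j_0} + n\mu/2$, this gives $(\mu/2)n \lesssim \phi_{j_0}$, impossible for large $n$. The abstract form of this iterative contradiction is \tref{l:elementary}, applied to $a_n := n + u_n$ (nondecreasing for $n$ large, with $n+1 \leq a_n \leq \tau n^{1/\alpha}$ for $\alpha > 0$ and $a_n \leq e^{\tau n}$ for $\alpha = 0$), whose two branches correspond precisely to our two cases. Consequently good pairs $(j,k)$ exist arbitrarily late, and \tref{criterion} then delivers $\H^n(f(N)) = 0$.

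The main obstacle is calibrating Step~2: the drop factor $c = e^{\mu/2}$ is essentially forced (needing $\log c < \mu$ so that the enforced no-drop growth $n\log c$ of $\phi_{j_n}$ is outpaced by its lower bound $\mu j_n^\alpha \gtrsim \mu n$), and some care is needed to handle possible non-monotonicity of the medians (via a running maximum of $a_n$) and to reconcile the threshold choice in the Young split with the scale gap $k-j$ dictated by \tref{l:elementary}.
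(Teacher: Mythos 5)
Your outline is a genuinely different route from the paper's: you fix the radii $e^{-k}$ and track the medians $m_k$, instead of the paper's stopping radii (chosen so that the median equals $e^{-k}$), and you replace the paper's Jensen-inequality upgrade of $\norm{Df}^n$ to the Orlicz integrand by a truncation split; the skeleton (Lemma~\ref{l:hoelder} plus an iteration in the spirit of \tref{l:elementary} feeding \tref{criterion}) is the same. However, Step~1 as stated is not correct, and it fails exactly in a regime that your Step~2 cannot reach. Write $u_j:=j-\phi_j=j+\log m_j$; membership in $S$ only guarantees $u_j>1$. Your claim that the first term $C(k-j)^{n-1}T^n|B_j|$ is at most $\tfrac12 m_j^n$ whenever $k-j\le c_0(j-\phi_j)^{1-\alpha}$ amounts, after taking logarithms and using $|B_j|\approx e^{-jn}$, to $n(k-j)^{1/(1-\alpha)}+(n-1)\log(1+k-j)+C'\le n\,u_j$, where $C'$ contains the constant of Lemma~\ref{l:hoelder} and a term $n\log\frac{1}{1-1/c}$ that blows up as $c\to1$; this is not dominated by $nu_j$ when $u_j$ is close to $1$. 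Moreover, with your fixed threshold $T=\exp\bigl((k-j)^{1/(1-\alpha)}\bigr)$ the main term alone forces $c_0\le1$, while your Step~2 contradiction ($\phi_{j_n}\gtrsim \mu c_1 n$ against $\phi_{j_n}<\phi_{j_0}+n\mu/2$, with $c_1$ of order $\alpha c_0$) needs $c_0\gtrsim1/\alpha$; so for small $\alpha$ the two steps are incompatible as calibrated. This part is repairable by inserting a small tunable constant in the exponent of $T$, or by shrinking $\log c$ (which, contrary to your remark, is not forced to be $\mu/2$), at the price of a larger $L$.

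The substantive hole is the bounded-$u$ regime: there are points of $S$ at which $u_j$ stays, say, in $(1,2]$ for all $j$ (medians decaying like $e^{-j}$ up to a bounded factor); such points are not covered by \tref{Holder part}, and at them your Step~1 is either false or vacuous (if $c_0<1$ no gap $k-j\ge1$ is admissible), while Step~2's iteration produces $u_{j_n}\to\infty$ only under the failure hypothesis and therefore never certifies good scales there. A separate, easy argument is needed: if $u_j\le U$ for all $j$, the median drops by the factor $c$ within a bounded number of steps $K=K(U,c)$, and then Lemma~\ref{l:hoelder} with $(1+k-j)^{n-1}\le (1+K)^{n-1}$ and the trivial bound $\norm{Df}^n\le\norm{Df}^n\log^{\lambda}(e+\norm{Df})$ gives the criterion inequality at every large scale without any splitting; more generally, run your Step~1 only at scales with $u_j\ge U_0$ (so the constants are absorbed) and treat scales with $u_j<U_0$ by this bounded-gap argument. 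The paper avoids the issue altogether because Jensen's inequality introduces only multiplicative constants and $e^{-k}/r_k\ge e$ by construction, making its key inequality \eqref{super} uniform over $S$; with the dichotomy just described, your route can be completed as well.
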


\begin{proof}
The goal is to apply \tref{criterion}. Assume $N\subset S$ is a set of measure
zero. We may assume that $\mu$ is constant on $N$. If $x\in N$, we choose $q\in
\field{N}$ such that $e^{-q}<R$ and such that $\omega_f(x,e^{-k-1})>e^{-k}$ for
all $k\geq q$.  We define
$$
r_k:=r_k(x):=\inf\{t>0: \omega_f(x,t) \ge e^{-k}\},\qquad k=q,\,q{+}1,\,q{+}2,\dots.
$$
It is clear that $e^{-k}$
is a median of $\norm{f-f(x)}$ in $B(x,r_k)$.
Since Sobolev functions have unique medians, we have
$$
R_k:=R_k(x):=\omega_f(x,r_k)=e^{-k}.
$$
As $x\in S$, using Lemma \ref{l:hoelder}, we obtain
\begin{equation}\label{UsingMedianEquation}
R_k^n=e^{-kn}\le C\bigl(1+\log(r_k/r_{k+1})\bigr)^{n-1}
\int_{A_k}\norm{Df}^n\,dy,
\end{equation}
where
\begin{equation*}
A_k:=A_k(x):=\{y\in B(x,r_k):\;|f(y)-f(x)|\le R_k(x)\}.
\end{equation*}
From now on, $C$ may vary from line to line but does not depend on $k$.
Our goal is to exploit the full integrability of $\norm{Df}$, i.e.\ we want to prove that
\begin{equation}\label{AlaRadoReichelderfer}
R_k^n\leq \frac{L}{n^\lambda}\int_{A_k} \norm{Df}^n\log^\lambda(e+\norm{Df})\, dy,
\end{equation}
for infinitely many $k$, where $L$ is independent of $k$. Towards applying Jensen's inequality with $\Phi(t)=t\log^\lambda(e+t)\sim
t\log^\lambda(e+t^{1/n})$, we rewrite \eqref{UsingMedianEquation} as
\begin{equation*}
\frac{R_k^n}{Cr_k^n(1+\log(r_k/r_{k+1}))^{n-1}}\leq
\dashint_{B(x,r_k)}\norm{Df}^n\chi_{A_k}\,dy.
\end{equation*}
We conclude that
\begin{equation*}
\frac{R_k^n}{Cr_k^n(1+\log(r_k/r_{k+1}))^{n-1}}\log^\lambda\Bigl(e+\frac{R_k^{n}}{Cr_k^n(1+\log(r_k/r_{k+1}))^{n-1}}\Bigr)
\leq \frac{1}{r_k^n}\int_{A_k} \Phi(\norm{Df}^n)\, dy.
\end{equation*}
Hence, it suffices to show that
\begin{equation}\label{super}
\limsup_{k\to \infty} \frac{\log^\lambda\bigl(e+\frac{e^{-kn}}{Cr_k^n(1+\log(r_k/r_{k+1}))^{n-1}}\bigr)}{(1+\log(r_k/r_{k+1}))^{n-1}}>0.
\end{equation}
To this end, we first consider the inequality
\begin{equation}\label{Bridging inequality}
(1+\log(r_k/r_{k+1}))^{n-1}\leq C\log^\lambda\Bigl(e+\frac{e^{-k}}{r_k}\Bigr).
\end{equation}
In fact, setting $a_j=\log(1/r_j)$, we denote the set of all indices that
satisfy inequality~\eqref{recurrent} by $J$.
Then $k\in J$
satisfy
\begin{equation*}
\log(r_k/r_{k+1})\leq C\Bigl(\log \frac{e^{-k}}{r_k}\Bigr)^{1-\alpha},
\end{equation*}
and \eqref{Bridging inequality} follows.
To prove that $J$ is infinite,
it suffices to verify the assumptions of
\tref{l:elementary}.

We see that $r_j\le e^{-j-1}$.
Obviously, $(a_j)_j$ forms a nondecreasing sequence and $a_j\ge j+1$.

If $\alpha>0$, we estimate
$$
     e^{-j}=\omega_f(x,r_j)\le \exp(-\mu\log^{\alpha}(1/r_j))=\exp(-\mu a_j^{\alpha}),
$$
so that
$$
j\ge \mu a_j^{\alpha},\qquad a_j\le (j/\mu)^{1/\alpha}.
$$

If $\alpha=0$,
we estimate
$$
     e^{-j}=\omega_f(x,r_j)\le \log^{-\mu}(1/r_j)=a_j^{-\mu},
$$
so that
$$
a_j\le e^{j/\mu}.
$$
The application of \tref{l:elementary} is permitted, and we see that \eqref{Bridging inequality} holds for infinitely many $k$,
where, in the notation of \tref{l:elementary}, $C=C(\mu^{-1/\alpha},\alpha)$
if $\alpha>0$ and $C=C(1/\mu,\alpha)$ if $\alpha=0$.

Inequality~\eqref{Bridging inequality} shows that
$(1+\log(r_k/r_{k+1}))^{n-1}\le C(e^{-k}/r_k)^{n-1}$ for all $k\in J$;
Indeed, for $s\ge e$ we have
$$
\log(e+s)=\int_1^{e+s}\frac{dt}{t}\le
\int_1^{s}\,dt+\int_{s}^{e+s}\frac{dt}{s}\le s-1+\frac es\le s-1+1=s,
$$
and $s:=\frac{e^{-k}}{r_k}\ge e$. Thus,
$$
\log^{\lambda}\Bigl(e+\frac{e^{-k}}{r_k}\Bigr)\le
\Bigl(\frac{e^{-k}}{r_k}\Bigr)^{\lambda}\le
\Bigl(\frac{e^{-k}}{r_k}\Bigr)^{n-1}.
$$
Hence there exists $C'>0$ such that for every $k\in J$ we have
$$
\log^\lambda\Bigl(e+\frac{e^{-kn}}{Cr_k^n(1+\log(r_k/r_{k+1}))^{n-1}}\Bigr)\ge
\log^\lambda\Bigl(e+\frac{e^{-k}}{C'r_k}\Bigr).
$$
Using \eqref{Bridging inequality} once more, \eqref{super} reduces to
$$
\limsup_{\substack{k\to 0\\k\in J}}
\frac{\log^{\lambda}\Bigl(e+\frac{e^{-k}}{C'r_k}\Bigr)}
{\log^{\lambda}\Bigl(e+\frac{e^{-k}}{r_k}\Bigr)} >0,
$$
which is easily verified.

Thus we have found a constant $L$ depending only on $x$, $\mu$, $\alpha$, $n$, and $f$ such that \eqref{AlaRadoReichelderfer} is true. We apply \tref{criterion} to finish.
\end{proof}

\section{Example}\label{Section Example}
Here, we construct an example as required in \tref{ExampleModulusIntro}.

We split the construction and the verification of the properties of the example over the next subsections. First, we note that it suffices to consider the case $d=n$; if $d>n$ and we have an example $f\colon \field{R}^n\to \field{R}^n$, we define $F\colon \field{R}^n\to \field{R}^d$ as $F(x)=(f_1(x),\ldots, f_n(x),0,\ldots,0)$.

\subsection{Definition of the mapping}
Let us start with the construction. We denote
$\beta=1/\alpha>1$ and set
\begin{align*}
r_j&:=2^n\exp(-(j+1)^\beta),\\
R_j&:=\exp(-j^\beta).
\end{align*}
Notice that $r_j/R_{j+1}=2^n$, so that each ball of radius $r_j$
contains at least $2^n$ pairwise disjoint balls of radius $R_{j+1}$.
Let us choose a natural number $j_0\ge 2$ such that
\begin{equation}\label{the choice}
(\beta-1)j_0^{\beta-1}>n\log2,\quad \sqrt n<2^{j_0-1}
\quad\text{and}\quad 2eR_{j_0}<1.
\end{equation}
Using the mean value theorem,  for $j > j_0$ we obtain
\begin{equation}\label{estimate c}
\aligned
\log\Bigl(\frac{R_j}{r_j}\Bigr)&
=\log\Bigl(\frac{\exp(-j^\beta)}{2^n\exp(-(j+1)^\beta)}\Bigr)
\\&
=
(j+1)^\beta-j^\beta-n\log 2\geq \beta j^{\beta-1}-n\log 2\geq j^{\beta-1},
\endaligned
\end{equation}
in particular $r_j<R_j$.
In the first generation, we choose $2^n$  balls
$B(a_{i,j_0+1},R_{j_0+1})$
of radius $R_{j_0+1}$ and the same number of concentric
balls with radius
$r_{j_0+1}$. In each subsequent generation with $j> j_0+1$, we choose in every ball of radius
$r_{j-1}$ of the previous generation
$2^n$ pairwise disjoint open balls of radius $R_{j}$
and in each of the new
balls a concentric closed  ball with radius $r_{j}$.
We denote by $a_{i,j}$ the
centers, $i=1,\dots,2^{(j-j_0)n}$.

The function
$$
\tilde\eta_j(r)=
\frac{\log R_j-\log r}{\log R_j-\log r_j}
$$
attains the values $\tilde\eta_j(R_j)=0$, $\tilde\eta_j(r_j)=1$.
By truncation and smoothing we find a smooth function $\eta_j:\er\to\er$ such that
$$
\aligned
&\eta_j(r)=0, && r\ge R_j,\\
&\eta_j(r)=1, && r\le r_j,\\
&0\le -r\eta_j'(r)\le 2\log^{-1}\Bigl(\frac{R_j}{r_j}\Bigr),\qquad && r_j<r<R_j.
\endaligned
$$
By \eqref{estimate c} we have
\begin{equation}\label{estimate of c}
0\le -\eta_j'(r)\le 2j^{1-\beta}\frac{1}{r},\qquad j> j_0.
\end{equation}
We define
\begin{equation*}
\Psi_{ij}(x):=\eta_j(\norm{a_{i,j}-x}),\qquad i=1,\dots,2^{(j-j_0)n}
\end{equation*}
and given a bijection $\sigma\colon \{1,\ldots, 2^n\}\to \{-1,1\}^{n}$, we set $v_k=\sigma(k)$ if $1\leq k\leq 2^n$ and $v_k:=v_{k-2^n}$ for $k>2^n$. Further, we define
\begin{equation*}
w_{ij}:=2^{-j}v_i,\qquad
i=1,\dots,2^{(j-j_0)n}.
\end{equation*}
Finally, we let
\begin{equation*}
f:=\sum_{j=j_0+1}^{\infty}f_j
\quad\text{with}\quad
f_j=
\sum_{i=1}^{2^{(j-j_0)n}} w_{ij}\Psi_{ij}(x).
\end{equation*}
The
mapping $f$ will serve as the required example.

\subsection{The mapping blows up a set of measure zero}
Let
$$
N_j:=\bigcup_{i=1}^{2^{(j-j_0)n}} \overline{B}(a_{i,j},r_j),\qquad N:=\bigcap_{j=j_0+1}^{\infty} N_j.
$$
Then
$$
|N_j|\le C2^{nj}r_j^n\le  C2^{nj}\exp(-n(j+1)^{\beta})\to0\quad \text{as }j\to\infty
$$
and thus $|N|=0$.
On the other hand, each dyadic cube intersecting  $[-2^{-j_0},2^{-j_0}]^n$
also intersects $f(N)$. The continuity of $f$, which we will verify in Subsection~\ref{VerifyingContinuity}, together with the fact that $N$ is compact,  now shows that $f(N)$ contains $[-2^{-j_0},2^{-j_0}]^n$ and hence has positive measure.

\subsection{The integrability of the derivative}
We write $A_{i,j}$ for the closed annulus with center $a_{i,j}$ and inner and outer radii $r_j$ and $R_j$, respectively, and
set
$$
A_j=\bigcup_{i=1}^{2^{(j-j_0)n}}A_{i,j}
$$
Then $Df_j=0$ outside $A_j$. Since the annuli $A_{i,j}$ are pairwise disjoint and $|w_{ij}|/\sqrt n=2^{-j}$ we have
\begin{equation}\label{WeakDerivative}
|Df(x)|=
|Df_j(x)|=\bigl|w_{ij}D\Psi_{ij}(x)\bigr|\le \sqrt n\;2^{-j}
|\eta_j'(|x-a_{i,j}|)|,\qquad x\in A_{i,j}.
\end{equation}
Using \eqref{estimate of c} we obtain
$$
\aligned
\int_{\rn}|Df_j|\, dy&\le C2^{nj}\int_{r_j}^{R_j}r^{n-1}2^{-j}|\eta_j'(r)|\,dr
\le C2^{nj-j}j^{1-\beta}\int_{r_j}^{R_j}r^{n-2}\,dr
\\&
\le  C2^{nj-j}j^{1-\beta}R_j^{n-1}=C2^{nj-j}j^{1-\beta}\exp(-(n-1)j^{\beta}).
\endaligned
$$
It follows that the partial sums form a fundamental sequence in $W^{1,1}$
and the total sum is a Sobolev function. Since $|N|=0$, we have
$$
\int_{\rn}|Df|^n\log^{\lambda}(e+|Df|)\, dy=
\sum_{j=j_0+1}^{\infty}I_j,
$$
where
\begin{align*}
I_j&=\int_{A_j}|Df|^n\log^{\lambda}(e+|Df|)\, dy\\
&\le C2^{nj}\;2^{-nj}\int_{r_j}^{R_j}r^{n-1}|\eta_j'(r)|^n\log^{\lambda}(e+\sqrt n\;2^{-j}|\eta_j'(r)|)\,dr.
\end{align*}
By \eqref{the choice} and \eqref{estimate of c}, we have
$$
e+\sqrt n\;2^{-j}|\eta_j'(r)|\le\frac1{2R_{j_0}}+ \frac1{2r}\le \frac1r,\qquad r_j<r<R_j.
$$
We estimate
$$
\aligned
I_j&\le Cj^{n(1-\beta)}\int_{r_j}^{R_j}\log^{\lambda}\Bigl(\frac 1r\Bigr)\;\frac{dr}{r}
\le Cj^{n(1-\beta)}\Bigl(\log^{\lambda+1}\Bigl(\frac1{r_j}\Bigr)
-\log^{\lambda+1}\Bigl(\frac1{R_j}\Bigr)\Bigr)
\\&\le Cj^{n(1-\beta)}\Bigl((j+1)^{\beta(\lambda+1)}-j^{\beta(\lambda+1)}\Bigr)
\le Cj^{\lambda\beta+\beta-1+n-n\beta}.
\endaligned
$$
We conclude that the integrability of the derivative is as required.

\subsection{Modulus of continuity}\label{VerifyingContinuity}
We seek for an estimate of the modulus of continuity of
$\tilde f=f_{j_0}+\dots+f_J$ which does not depend on $J$. The same estimate
then holds for $f$. Without loss of generality, choose
$x,y\in\rn\setminus N$
such that $\tilde f(x)\ne0\ne \tilde f(y)$.
Then there exist chains of balls,
$$
\aligned
&\ball(a_{j_0+1}(x),R_{j_0+1})\supset \ball(a_{j_0+2}(x),R_{j_0+2})
\supset\dots\supset  \ball(a_{j_x}(x),R_{j_x})\ni x,\\
&\ball(a_{j_0+1}(y),R_{j_0+1})\supset \ball(a_{j_0+2}(y),R_{j_0+2})
\supset\dots\supset  \ball(a_{j_y}(y),R_{j_y})\ni y,
\endaligned
$$
such that $a_{j}(x),\,a_{j}(y)\in \{a_{i,j}\colon i=1,\dots,2^{(j-j_0)n} \}$
and
$$
\aligned
&x\notin \bigcup_{j>j_x} \bigcup_{i=1}^{2^{(j-j_0)n}}\ball(a_{i,j},R_{j}),
&y\notin \bigcup_{j>j_y} \bigcup_{i=1}^{2^{(j-j_0)n}}\ball(a_{i,j},R_{j}).
\endaligned
$$
We denote by $A_j(x)$ and $A_j(y)$ the corresponding annuli
$\ball(a_j(x),R_j)\setminus B(a_j(x),r_j)$ and
$\ball(a_j(y),R_j)\setminus B(a_j(y),r_j)$, respectively.
We may assume that $j_x\le j_y$. Let $L$ be the line segment connecting $x$ and $y$. Our next step is to find $x',y'\in L$ such that
\begin{equation}\label{primed}
|\tilde f(y)-\tilde f(x)|\le C|\tilde f(y')-\tilde f(x')|,
\end{equation}
and $x'$ and $y'$ belong both to the same annulus $A_{i,j}$. This is possible
if $\tilde f(y)\ne\tilde f(x)$, but otherwise there is nothing to be estimated.

We distinguish several cases.

{\sc Case 1.} There exists $a\in\rn$ such that $a_{j_x}(x)=a_{j_x}(y)=a$
and $j_y=j_x$. We may assume that
$|x-a|\ge |y-a|$. Then $x\in A_{j_x}(x)$, as otherwise we would get
$\tilde f(x)=\tilde f(y)$. We
set $x'=x$ and find $y'\in L\cap A_{j_x}(x)$ such that
$\tilde f(y')=\tilde f(y)$. We have $|\tilde f(x')-\tilde f(y')|=|\tilde f(x)-\tilde f(y)|$.

{\sc Case 2.} There exist $a,b\in\rn$ such that $a_{j_x}(x)=a_{j_x}(y)=a$,
$a_{j_y}(y)=b$
and $j_y=j_x+1$. Then we find $z\in L\cap \partial B(b,R_{j_y})$.
If $|\tilde f(y)-\tilde f(z)|\ge |\tilde f(x)-\tilde f(z)|$, we find
$y'\in L\cap A_{j_y}(y)$
such that $\tilde{f}(y')=\tilde{f}(y)$ and
set $x'=z$.  If $|\tilde f(y)-\tilde f(z)|< |\tilde f(x)-\tilde f(z)|$,
we find $y'\in L\cap \partial B(a,r_{j_x})$
and set $x'=x$. We obtain
$|\tilde f(x')-\tilde f(y')|\ge\tfrac12|\tilde f(x)-\tilde f(y)|$.

{\sc Case 3.} There exist $a,b\in\rn$ such that $a_{j_x}(x)=a_{j_x}(y)=a$,
$a_{j_x+1}(y)=b$
and $j_y>j_x+1$. We find $x'\in L\cap \partial B(b,R_{j_x+1})$ and
$y'\in L\cap \partial B(b,r_{j_x+1})$.
Then
$$
|\tilde f(y)-\tilde f(x)|\le \sqrt n\;(2^{-j_x}+2^{-j_x-1}+\cdots)\le 4\sqrt n\;
2^{-j_x-1}
\le 4|\tilde f(y')-\tilde f(x')|.
$$

{\sc Case 4.} There exists $j_1\le j_x$
such that $a_{j_1}(x)\ne a_{j_1}(y)$, and $a_{j}(x)= a_{j}(y)$
whenever $j_0<j<j_1$. Then we find $x''\in L\cap \partial B(a_{j}(x),R_j)$,
$y''\in L\cap \partial B(a_{j}(y),R_j)$ and estimate
$$
|\tilde f(y)-\tilde f(x)|\le|\tilde f(y)- \tilde f(y'')|+|\tilde f(x'')-\tilde f(x)|.
$$
The relation of $x$ and $x''$ and of $y$ and $y''$, respectively, is one of these described in the preceding steps. Hence, we find $x'$, $y'\in L$
such that  $x'$ and $y'$ belong both to the same annulus $A_{i,j}$ and
$$
|\tilde f(y)-\tilde f(x)|\le 8|\tilde f(y')- \tilde f(x')|.
$$

Therefore, in either case we obtain \eqref{primed} with $C=8$.
Letting $J\to\infty$, we observe that
$$
|f(y)-f(x)|\le 8 \sup_{i}\sup_{j}\{|f(\bar y)- f(\bar x)|\colon \bar x,\bar y\in A_{i,j},\ |\bar y-\bar x|\le |y-x|\}.
$$
Since $\omega$ is increasing,
we can reduce the estimate to the case that $x$ and $y$ belong
both to the same annulus $A_{i,j}$.
We have
\begin{equation*}
\begin{split}
\norm{f(x)-f(y)}&=\norm{f_j(x)-f_j(y)}\\
&=\norm{w_{ij}\Psi_{ij}(x)-w_{ij}\Psi_{ij}(y)}=2^{-j}\sqrt n\;
\abs{\eta_j(\norm{a_{i,j}-x})-\eta_j(\norm{a_{i,j}-y})}.
\end{split}
\end{equation*}
We assume that $\norm{a_{i,j}-x}\leq \norm{a_{i,j}-y}$ and set $r:=\norm{a_{i,j}-x}$,
$h=\norm{a_{i,j}-y}-r$. Then $0\le h\le \min\{|x-y|,R_j-r_j\}$.
Thus
\begin{equation*}
\frac{\norm{f(x)-f(y)}}{\sqrt n}\leq 2^{-j}\int_r^{r+h}\abs{\eta'(t)}\,dt
\le \frac{2j^{1-\beta}}{2^j}\log\Bigl(1+\frac{h}{r}\Bigr)
\leq \frac{j^{1-\beta}}{2^{j-1}}\log\Bigl(1+\frac{h}{r_j}\Bigr).
\end{equation*}
Set
$$
s=\log^{\frac 1\beta}\bigl(\tfrac{1}{h}\bigr).
$$
Then
$$
e^{-j^{\beta}}=R_j\ge h=e^{-s^{\beta}},
$$
and thus $s\ge j$. We will distinguish two cases.

{\sc Case 1.} $j\le s\le j+1$. Then
$$
\aligned
&j^{1-\beta}2^{1-j}\log\Bigl(1+\frac{e^{-s^{\beta}}}
{2^ne^{-(j+1)^{\beta}}}\Bigr)
\\&\quad
\le
j^{1-\beta}2^{1-j}\log\Bigl(2\frac{e^{-s^{\beta}}}
{e^{-(j+1)^{\beta}}}\Bigr)\le
j^{1-\beta}2^{1-j}(\log 2 +(j+1)^{\beta}-j^{\beta})
\\&\quad
\le
j^{1-\beta}2^{1-j}(\log 2 +\beta (j+1)^{\beta-1})\le 2^{1-j}(\log 2+\beta2^{\beta-1})\le C2^{-s}.
\endaligned
$$

{\sc Case 2.} $s>j+1$. Then
$$
s^{\beta}-(j+1)^{\beta}\ge \beta(s-j-1)(j+1)^{\beta-1}\ge (s-j-1).
$$
Hence
$$
\aligned
&j^{1-\beta}2^{1-j}\log\Bigl(1+\frac{e^{-s^{\beta}}}
{2^ne^{-(j+1)^{\beta}}}\Bigr)
\\&\quad
\le
2^{1-j-n}\frac{e^{-s^{\beta}}}
{e^{-(j+1)^{\beta}}}\le 2^{1-j-n}e^{j+1-s}\le 2^{1-j-n}2^{j+1-s}\le C2^{-s}.
\endaligned
$$

In both cases we have
$$
| f(y)-f(x)|\le C2^{-s}=C2^{-\log^{\frac 1\beta}(\frac 1h)}
\le C2^{-\log^{\frac 1\beta}(\frac 1{|y-x|})},
$$
which is the required modulus of continuity.

\subsection{The case \texorpdfstring{$\lambda=n-1$}{lambda=n-1}}
We may proceed as in the case $0\le \lambda<n-1$ and
construct
a mapping $f\in W^{1,1}(\field{R}^n;\field{R}^n)$
such that
\begin{equation*}
\int_{\field{R}^n} \norm{Df}^n\log^{n-1}(e+\norm{Df})\,dy<\infty,
\end{equation*}
$f$ has modulus
of continuity no worse than
\begin{equation*}
\Psi(t)=C\exp(-\log2(\log\log(1/t))^{\frac{1}{\beta}})
\end{equation*}
and violates Luzin's condition~(N).

In this case, we fix $\beta >\frac {n}{n-1}$ and set
\begin{align*}
r_j&:=2^n\exp(-\exp(j+1)^\beta),\\
R_j&:=\exp(-\exp j^\beta).
\end{align*}
The only other substantial change to the construction that we gave above is that
we set
$$
\tilde\eta_j(r)=
\frac{\log \log (1/r)-\log \log(1/R_j)}{\log \log(1/r_j)-\log \log(1/R_j)}.
$$
We leave the details to the reader.

\section{Size of the exceptional set}\label{Sec exceptional set}
Marcus and Mizel have shown in \cite{MM73} that continuous mappings in $W^{1,p}(\Omega;\field{R}^n)$ satisfy Luzin's condition~(N) for $p>n$. From \cite{mal_lusins_1995}, we know that given an $n$\nobreakdash-quasicontinuous mapping $f$ in $W^{1,n}(\Omega;\field{R}^n)$, we find a zero-dimensional set $E$ (depending on $f$) such that $f$ satisfies Luzin's condition~(N) in $\Omega\setminus E$. Here, we give an upper bound for the size of the exceptional set when we have an additional logarithmic term in the integrability condition for $\norm{Df}$.

We will use the following result to estimate the size of the set where $f$ fails to satisfy Luzin's condition.

\begin{lemma}\tlabel{Measure bound}
Let $v\colon \Omega\to [0,\infty)$ be an integrable function and $\varphi$ a gauge function.
Let $E\subset\Omega$.
Assume that
\begin{equation*}
\limsup_{r\to0_+}( \varphi(2r))^{-1}\int_{B(x,r)} v\, dy>1 ,\qquad x\in E.
\end{equation*}
Then there exists a constant $C=C(n)$ such that
\begin{equation*}
\mathcal{H}^\varphi(E)\leq C\int_U v\, dy
\end{equation*}
for each open set $U\subset \Omega$ containing $E$.\\
Moreover, if $\measure{E}=0$, then $\mathcal{H}^\varphi(E)=0$.
\end{lemma}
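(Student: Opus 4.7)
The plan is to convert the local density hypothesis into a Besicovitch-type fine cover of $E$, use the disjoint subfamilies produced by the Besicovitch covering theorem to avoid having to enlarge balls, and then let the scale parameter $\delta$ in the definition of the Hausdorff premeasure tend to zero.

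Fix an open set $U \subset \Omega$ containing $E$ and a number $\delta \in (0,1)$. For each $x \in E$ the hypothesis supplies a radius $r = r(x) \in (0, \delta/2)$, taken small enough that $B(x,r) \subset U$, for which
\begin{equation*}
\varphi(2r) < \int_{B(x,r)} v\, dy.
\end{equation*}
The collection of all such balls is a Besicovitch fine cover of $E$, so by the Besicovitch covering theorem there exist $N = N(n)$ subfamilies $\mathcal{F}_1, \dots, \mathcal{F}_N$ of pairwise disjoint balls drawn from this cover whose union still contains $E$. Each ball $B(x_i, r_i) \in \mathcal{F}_k$ has diameter $2 r_i \le \delta$ and lies in $U$, so using disjointness within each $\mathcal{F}_k$ I obtain
\begin{equation*}
\mathcal{H}^\varphi_\delta(E) \le \sum_{k=1}^{N} \sum_{B(x_i, r_i) \in \mathcal{F}_k} \varphi(2 r_i) \le \sum_{k=1}^{N} \sum_i \int_{B(x_i, r_i)} v\, dy \le N \int_U v\, dy.
\end{equation*}
Letting $\delta \to 0^+$ yields $\mathcal{H}^\varphi(E) \le N \int_U v\, dy$, which is the first assertion with $C = N(n)$. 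The ``moreover'' part is then immediate: if $|E| = 0$, outer regularity of Lebesgue measure lets me choose open sets $U_k \supset E$ with $|U_k| \to 0$, and absolute continuity of the Lebesgue integral of $v$ forces $\int_{U_k} v\, dy \to 0$.

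The one point requiring some care is the choice of covering theorem. The classical Vitali $5r$-lemma would only cover $E$ by the enlarged balls $B(x_i, 5 r_i)$, so the sum one would control is $\sum \varphi(10 r_i)$ rather than $\sum \varphi(2 r_i)$; since $\varphi$ is required only to be a nondecreasing gauge and need not satisfy any doubling condition, there is no way to trade between these two quantities. The Besicovitch theorem, by producing a bounded-overlap cover without enlarging the balls, is exactly what makes the estimate go through and is the main (and essentially only) obstacle in the argument.
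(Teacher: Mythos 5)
Your proof is correct and follows essentially the same route as the paper: extract from the limsup hypothesis a fine cover of $E$ by balls $B(x,r)$ with $\varphi(2r)\le\int_{B(x,r)}v\,dy$, apply the Besicovitch covering theorem to get $N(n)$ disjoint subfamilies covering $E$, bound $\mathcal H^\varphi_\delta(E)$ by $N\int_U v\,dy$, and finish the ``moreover'' part by absolute continuity of the integral. Your closing remark on why Besicovitch (rather than a Vitali $5r$-type lemma) is needed, since $\varphi$ need not be doubling, is a correct observation consistent with the paper's choice of covering theorem.
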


\begin{proof}
Let $U$ be an open set containing $E$ and fix $\delta>0$. Choose for each $x\in E$ a radius $r_x<\delta/2$ such that $B(x,2r_x)\subset U$ and
\begin{equation*}
\int_{B(x,r_x)} v\, dy\geq \varphi(2r_x).
\end{equation*}
By Besicovitch's Theorem, see for example Theorem~1.5.2 in \cite{EG92}, there exist a constant $N=N(n)$ and subcollections $\mathcal{G}_1,\ldots, \mathcal{G}_m$, $m\leq N$, such that each of these collections consists of disjoint balls, and the union of all collections covers $E$. Then
\begin{equation*}
\mathcal{H}^\varphi_\delta(E)\leq \sum_{i=1}^{N}\sum_{B(x,r_x)\in \mathcal{G}_i}\varphi(2r_x)\leq \sum_{i=1}^{N}\sum_{B(x,r_x)\in \mathcal{G}_i} \int_{B(x,r_x)}v\, dy \leq N\int_U v\, dy.
\end{equation*}
Since $\delta>0$ was arbitrary, the first claim follows.

Assume now that $\measure{E}=0$. Given $\varepsilon>0$, by the absolute continuity of the integral and the fact that $v$ is integrable, we can choose the open set $U$ so small that
\begin{equation*}
N\int_U v\, dy<\varepsilon.
\end{equation*}
This implies $\mathcal{H}^\varphi(E)<\varepsilon$ and letting $\varepsilon$ tend to zero, we obtain the claim.
\end{proof}

Recall that $f\in L_{\loc}^1(\Omega;\rd)$
is said to be a \textit{precise representative} if for each $x\in\Omega$ and
$c\in\rd$ we have
$$
\lim_{r\to 0_+}\dashint_{B(x,r)}|f-c|\,dy=0 \implies c=f(x).
$$
To investigate moduli of continuity of precise representatives, we need a joint
estimate for integral means and medians.

For the rest of this section, we fix $0\le \lambda\leq n-1$
and $\mu>0$,
write $\alpha=1-\frac{\lambda}{n-1}$ and set
$$
\aligned
\varphi(r)&=
\begin{cases}
\exp\bigl(-(n+1)\mu\log^{\alpha}(\tfrac{1}{r})\bigr),&\alpha>0,\\
2^{-n}\mu^n\log^{-n(1+\mu)}(\tfrac{1}{r}),&\alpha=0,
\end{cases}\\
\psi(r)&=
\begin{cases}
\exp(-\mu\log^{\alpha}(\tfrac{1}{r})),&\alpha>0,\\
\log^{-\mu}(\tfrac{1}{r})),&\alpha=0.
\end{cases}
\endaligned
$$

\begin{lemma}\tlabel{Sum of moduli}
There exists a radius $R_0=R_0(\alpha,\mu)$ such that
$$
\psi(2r)-\psi(r)\ge (\varphi(r))^{1/n},\qquad 0<r<R_0.
$$
\end{lemma}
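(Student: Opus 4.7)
The plan is to change variables $L := \log(1/r)$, so that $\log(1/(2r)) = L - \log 2$, and to show the desired inequality holds for all $L$ larger than some threshold $L_0$ depending on $\alpha$ and $\mu$; then $R_0 = e^{-L_0}$ does the job. I will treat the cases $\alpha = 0$ and $\alpha > 0$ separately, following the piecewise definitions of $\psi$ and $\varphi$.

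For $\alpha = 0$, I would apply the mean value theorem directly to $s \mapsto s^{-\mu}$ on the interval $[L - \log 2, L]$, obtaining
$$\psi(2r) - \psi(r) = (L - \log 2)^{-\mu} - L^{-\mu} = \mu \log 2 \cdot \xi^{-\mu - 1} \geq \mu \log 2 \cdot L^{-\mu - 1}$$
for some $\xi \in (L - \log 2, L)$. Since $\log 2 > 1/2$, this lower bound exceeds $(\mu/2) L^{-(1 + \mu)} = (\varphi(r))^{1/n}$, so the inequality holds as soon as $L > \log 2$.

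For $\alpha > 0$, I would factor $\psi(2r) - \psi(r) = \psi(r)(e^{h} - 1)$ with $h := \mu[L^\alpha - (L - \log 2)^\alpha]$. The mean value theorem applied to $s \mapsto s^\alpha$, combined with $\alpha \leq 1$ (so $s \mapsto s^{\alpha - 1}$ is nonincreasing in $s$), yields $h \geq \mu \alpha \log 2 \cdot L^{\alpha - 1} \geq 0$. Using $e^{h} - 1 \geq h$ and the identity $\psi(r) = \exp(-\mu L^\alpha)$, the problem reduces to verifying
$$\mu \alpha \log 2 \cdot L^{\alpha - 1} \geq \exp\bigl(-\tfrac{\mu L^\alpha}{n}\bigr),$$
which is immediate once $L$ is large: the left-hand side decays only polynomially in $L$ (in fact it is a positive constant when $\alpha = 1$), while the right-hand side decays stretched-exponentially.

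The argument is essentially a routine asymptotic comparison and I do not expect a genuine obstacle. The only mild care needed is to choose $L_0$ (equivalently $R_0$) large enough that simultaneously $L - \log 2 > 0$, the mean value estimate is applicable, and the final exponential-versus-polynomial comparison is valid; at the endpoint $\alpha = 1$, the factor $L^{\alpha - 1}$ degenerates to a constant $\mu \log 2$, which causes no trouble.
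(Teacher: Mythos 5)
Your proof is correct and follows essentially the same route as the paper: in both cases you bound the difference via the mean value theorem (writing $\psi(2r)-\psi(r)\ge\psi(r)\,h$ with $h=\mu[\log^\alpha(1/r)-\log^\alpha(1/(2r))]$ for $\alpha>0$, and applying the MVT to $s\mapsto s^{-\mu}$ with the observation $\log 2>\tfrac12$ for $\alpha=0$), and then reduce to the same polynomial-versus-stretched-exponential comparison $\mu\bigl(\log^{\alpha}(\tfrac1r)-\log^{\alpha}(\tfrac1{2r})\bigr)\ge\exp\bigl(-\tfrac{\mu}{n}\log^{\alpha}(\tfrac1r)\bigr)$ that the paper states. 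You merely spell out the details (the MVT applied to $s^\alpha$ using $\alpha\le1$) that the paper leaves to the reader.
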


\begin{proof} Assume first that $\alpha>0$. We have
$$
\aligned
\psi(2r)-\psi(r)&=
\exp\bigl(-\mu\log^{\alpha}(\tfrac{1}{2r})\bigr)-
\exp\bigl(-\mu\log^{\alpha}(\tfrac{1}{r})\bigr)
\\&
\ge
\mu\exp\bigl(-\mu\log^{\alpha}(\tfrac{1}{r})\bigr)
\Bigl(\log^{\alpha}(\tfrac{1}{r})-\log^{\alpha}(\tfrac{1}{2r})\Bigr).
\endaligned
$$
Now, it is enough to find $R_0$ such that
$$
\mu\Bigl(\log^{\alpha}(\tfrac{1}{r})-\log^{\alpha}(\tfrac{1}{2r})\Bigr)
\ge\exp\bigl(-\tfrac{\mu}{n}\log^{\alpha}(\tfrac{1}{r})\bigr),
\qquad 0<r<R_0.
$$
If $\alpha=0$, we estimate
$$
\aligned
\psi(2r)-\psi(r)&=
\log^{-\mu}(\tfrac{1}{r})-
\log^{-\mu}(\tfrac{1}{2r})
\\&
\ge
\mu\log^{-\mu-1}(\tfrac{1}{r})
\bigl(
\log(\tfrac{1}{r})
-\log(\tfrac{1}{2r})
\bigr)
=\mu\log2\log^{-\mu-1}(\tfrac{1}{r}).
\endaligned
$$
\end{proof}

\begin{lemma}\tlabel{Lebesgue point}
Let $f\in W^{1,n}(\Omega;\field{R}^d)$ and $x\in\Omega$.
Suppose that
\begin{equation*}
\limsup_{r\to0_+}\frac{\int_{B(x,r)}|Df|^n\, dy}{\varphi(2r)}\le 1.
\end{equation*}
Then there is a constant $C=C(\alpha,\gamma,n)$ and $R=R(x)>0$ such that for each
$0<t<r$ we have
\begin{equation}\label{estimate fbs}
\norm{f_{B(x,r)}-f_{B(x,t)}}
+|\omega_{f}(x,r)-\omega_{f}(x,t)|\leq C\psi(r).
\end{equation}
\end{lemma}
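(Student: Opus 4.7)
The plan is to run a telescoping argument along the dyadic chain of concentric balls $B(x,r_k)$ with $r_k = r/2^k$, bounding each one-step jump by a Poincar\'e-type estimate and then summing the resulting geometric-type series via Lemma~\ref{Sum of moduli}. First I would use the $\limsup$ hypothesis to pick $R = R(x) > 0$ so small that $\int_{B(x,s)}|Df|^n\,dy \le 2\varphi(2s)$ holds for all $0 < s \le R$, that $R \le R_0$ so Lemma~\ref{Sum of moduli} is available along the whole chain, and that $\psi(4s) \le C\psi(s)$ for $s \le R$ (the last is elementary from the explicit form of $\psi$). Then, given $0 < t < r \le R$, I pick the integer $K$ with $r_{K+1} \le t \le r_K$ and split each of the two quantities by the triangle inequality along $r,r_1,\dots,r_K,t$.

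For the mean part the standard Sobolev--Poincar\'e step yields
\begin{equation*}
|f_{B(x,r_k)} - f_{B(x,r_{k+1})}| \le C r_k \Bigl(\dashint_{B(x,r_k)} |Df|^n \, dy\Bigr)^{1/n} \le C \varphi(2r_k)^{1/n},
\end{equation*}
where the last inequality uses the choice of $R$, and the final piece $|f_{B(x,r_K)} - f_{B(x,t)}|$ is handled identically since $B(x,r_{K+1}) \subset B(x,t) \subset B(x,r_K)$. For the median part I would set $u(y) := |f(y) - f(x)|$ so that $u \in W^{1,n}$, $|Du| \le |Df|$, and $\omega_f(x,\rho)$ is exactly the median of $u$ on $B(x,\rho)$. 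Applying Lemma~\ref{l:riesz} to $u$ (translated to the origin) on each consecutive pair $B(x,r_k) \supset B(x,r_{k+1})$ produces two terms; after H\"older's inequality and the hypothesis on $|Df|^n$ both reduce to $C\varphi(2r_k)^{1/n}$, since the weight $|y-x|^{1-n}$ is bounded by $r_{k+1}^{1-n}$ on the single dyadic annulus $B(x,r_k)\setminus B(x,r_{k+1})$.

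To close the telescoping I would invoke Lemma~\ref{Sum of moduli} in the form $\varphi(r_{k-1})^{1/n} \le \psi(r_{k-2}) - \psi(r_{k-1})$ (using $2r_k = r_{k-1}$) to obtain
\begin{equation*}
\sum_{k=0}^{\infty} \varphi(2r_k)^{1/n} = \sum_{k=0}^{\infty} \varphi(r_{k-1})^{1/n} \le \psi(r_{-2}) = \psi(4r) \le C\psi(r),
\end{equation*}
which simultaneously collapses both the mean and the median sums to the required bound. The main obstacle I anticipate is purely a matter of care: applying Lemma~\ref{l:riesz} to $u = |f - f(x)|$ rather than to $|f|$ so that the medians appearing are genuinely $\omega_f(x,r_k)$, and choosing $R$ small enough that the $\limsup$ hypothesis, Lemma~\ref{Sum of moduli}, and the quasi-doubling inequality $\psi(4r) \le C\psi(r)$ hold simultaneously on $(0,R]$. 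No new ideas beyond the existing lemmas seem to be needed.
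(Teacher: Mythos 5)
Your proposal is correct and follows essentially the same route as the paper: shrink $R$ using the $\limsup$ hypothesis and Lemma~\ref{Sum of moduli}, control each dyadic step by the Poincar\'e inequality for the means and by the Riesz-type estimate applied to $u=|f-f(x)|$ for the medians, and then telescope the resulting differences of $\psi$. The only (harmless) deviation is bookkeeping: the paper absorbs the factor-of-two mismatch through a doubling bound $\varphi(2r)\le C\varphi(r/2)$, while you telescope up to $\psi(4r)$ and use the elementary quasi-doubling of $\psi$ instead.
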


\begin{proof}
Let $R_0$ be the radius from \tref{Sum of moduli}. Choose $0<R<R_0$
such that
$B(x,R)\subset\Omega$ and
\begin{equation}\label{added maximum bound}
\int_{B(x,r)}|Df|^n\, dy\le 2\varphi(2r)\le C\varphi(r/2),\qquad 0<r<R.
\end{equation}
If $\varphi$ is concave on a right neighborhood of $0$, then we may choose $C=8$. This fails (and calls for a different constant) only in the case $\alpha=1$, $(n+1)\mu>1$.

First consider the case $0<t<r\le 2t<R$. Then
$$
\norm{f_{B(x,r)}-f_{B(x,t)}}
+|\omega_{f}(x,r)-\omega_{f}(x,t)|
\le C
\Bigl(\int_{B(x,r)} \norm{Df}^n\, dy\Bigr)^{\frac{1}{n}}.
$$
The estimate of $\norm{f_{B(x,r)}-f_{B(x,t)}}$ is a standard use of the Poincar\'e inequality; for the estimate of $|\omega_{f}(x,r)-\omega_{f}(x,t)|$
we apply the Poincar\'e inequality
(to $u=|f-f(x)|$) like in the proof of Lemma \ref{l:riesz}.
We continue by applying \eqref{added maximum bound} and \tref{Sum of moduli}
and obtain
\begin{equation}\label{withdiff}
\aligned
&\norm{f_{B(x,r)}-f_{B(x,t)}}
+|\omega_{f}(x,r)-\omega_{f}(x,t)|
\leq
C(\varphi(r/2))^{\frac1n}
\\&\quad
\le C(\psi(r)-\psi(r/2)).
\endaligned
\end{equation}
In the general case $0<t<r<R$,
we find $j\in\en$ such that $2^{j-1}\le r/t<2^{j}$ and
consider a chain of balls $B(0,r_k)$,
$k=0,\dots,j$, where $r_0=r$ and
$r_k=t2^{j-k}$ for $k=1,\dots,j$.
Then we apply \eqref{withdiff} to each ball in the chain. Summing over
$k$ and using the cancellation effect of \eqref{withdiff} we obtain
$$
\norm{f_{B(x,r)}-f_{B(x,t)}}
+|\omega_{f}(x,r)-\omega_{f}(x,t)|\le C\psi(r).
$$
\end{proof}

\begin{lemma}\tlabel{modulus bounds} Let $f\in W^{1,n}(\Omega;\field{R}^d)$ be a precise representative and $x\in\Omega$.
Suppose that
\begin{equation*}
\limsup_{r\to0_+}\frac{\int_{B(x,r)}|Df|^n\, dy}{\varphi(2r)}\le 1.
\end{equation*}
Then there exists a constant $C=C(\alpha,\gamma)$ and $R=R(x)>0$ such that
\begin{equation*}
\omega_f(x,r)\leq C\psi(r),\qquad 0<r<R.
\end{equation*}
\end{lemma}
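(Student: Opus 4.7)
The plan is to pass to the limit $t\to 0_+$ in the estimate from \tref{Lebesgue point}. Once we know that $\omega_f(x,t)\to 0$ as $t\to 0_+$, sending $t\to 0_+$ in
$$
\norm{f_{B(x,r)}-f_{B(x,t)}}+|\omega_f(x,r)-\omega_f(x,t)|\le C\psi(r)
$$
immediately yields $\omega_f(x,r)\le C\psi(r)$ for $0<r<R$, which is the desired bound (and as a byproduct we obtain $\norm{f_{B(x,r)}-f(x)}\le C\psi(r)$).

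To establish that $\omega_f(x,t)\to 0$, I first identify the limit of the averages $f_{B(x,r)}$ as $r\to 0_+$. By \tref{Lebesgue point}, $(f_{B(x,r)})_{r>0}$ is Cauchy along $r\to 0_+$, so it converges to some $c\in\field{R}^d$. On the other hand, Poincar\'e's inequality together with the hypothesis $\int_{B(x,r)}|Df|^n\,dy\le 2\varphi(2r)\to 0$ implies $\dashint_{B(x,r)}\norm{f-f_{B(x,r)}}\,dy\to 0$; combining these two facts gives $\dashint_{B(x,r)}\norm{f-c}\,dy\to 0$. The precise-representative assumption then forces $c=f(x)$.

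Second, I bound the median modulus by the average deviation using Chebyshev's inequality. Since
$$
\measure{\{y\in B(x,t):\norm{f(y)-f(x)}\ge s\}}\le \tfrac{1}{s}\int_{B(x,t)}\norm{f-f(x)}\,dy,
$$
any $s>2\dashint_{B(x,t)}\norm{f-f(x)}\,dy$ makes the left side strictly smaller than $\tfrac12\measure{B(x,t)}$. Hence
$$
\omega_f(x,t)\le 2\dashint_{B(x,t)}\norm{f-f(x)}\,dy,
$$
and the right side tends to $0$ as $t\to 0_+$ by the first step (applied once more together with Poincar\'e's inequality on $B(x,t)$). This closes the argument.

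I do not anticipate any serious obstacle. The only delicate point is that the precise-representative hypothesis is exactly what is needed to upgrade Cauchyness of the averages into identification with the pointwise value $f(x)$; without that hypothesis, one could only say that the averages converge to some $c$, which a priori might differ from $f(x)$, and the median modulus could then fail to go to zero at $x$.
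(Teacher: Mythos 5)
Your proposal is correct and follows essentially the same route as the paper: use \tref{Lebesgue point} to get Cauchyness of the averages $f_{B(x,r)}$, identify the limit with $f(x)$ via the Poincar\'e inequality and the precise-representative hypothesis, deduce $\omega_f(x,t)\to 0$, and then let $t\to 0_+$ in the estimate $|\omega_f(x,r)-\omega_f(x,t)|\le C\psi(r)$. Your explicit Chebyshev bound $\omega_f(x,t)\le 2\dashint_{B(x,t)}\norm{f-f(x)}\,dy$ simply spells out the step the paper dismisses with ``it easily follows,'' and it is valid for the median modulus.
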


\begin{proof}
By \tref{Lebesgue point}, there exists $R>0$ such that
$$
\norm{f_{B(x,r)}-f_{B(x,t)}}\le C\psi(r),\qquad 0<t<r<R.
$$
This guarantees the existence of the limit
$$
c=\lim_{r\to0_+}f_{B(x,r)}.
$$
By the Poincar\'e inequality, we have
$$
\aligned
\dashint_{B(x,r)}|f-c|\,dy&\le |f_{B(x,r)}-c|+\dashint_{B(x,r)}|f-f_{B(x,r)}|\, dy
\\&
\le |f_{B(x,r)}-c|+
C\Bigl(\int_{B(x,r)}|Df|^n\, dy\Bigr)^{1/n}\to 0\quad \text{as }r\to 0_+.
\endaligned
$$
Thus $c=f(x)$ and $x$ is a Lebesgue point for $f$. It easily follows that
$$
\lim_{t\to 0_+} \omega_f(x,t)=0.
$$
Now, using \eqref{estimate fbs} again we have
$$
|\omega_f(x,r)-\omega_f(x,t)|\le C\psi(r),\qquad 0<t<r<R.
$$
We pass to the limit for $t\to0_+$ and obtain the required estimate.
\end{proof}

\begin{proof}[Proof of \tref{Size of exceptional set}]
Without loss of generality, we may assume that $\gamma<n$. Set
$$
E=\biggl\{
x\in\Omega:\;\limsup_{r\to0_+}\frac{\int_{B(x,r)}|Df|^n\, dy}{\varphi(2r)}>1\biggr\}.
$$
Then $E$ does not contain any Lebesgue point of $|Df|^n$ and thus $|E|=0$.
\tref{Measure bound} gives
\begin{equation*}
\mathcal{H}^\varphi(E)=0.
\end{equation*}
By \tref{modulus bounds}, for each $x\in\Omega\setminus E$ we have
$$
\omega_f(x,r)\le C\psi(r),\qquad 0<r<R(x).
$$
Hence we may apply \tref{t:main} to obtain the claim.
\end{proof}

\section{Regularity of the blown up set}\label{Section Regularity}
As compact, perfect, totally disconnected sets are exactly the sets homeomorphic to the ternary  Cantor set, cf.\ Corollary~2-98 in \cite{HockingYoung}, we refer to them as \emph{Cantor sets}. In \tref{ExampleModulusIntro}, a Cantor set gets mapped onto a set of positive measure. Our next result shows all examples must exhibit such a behavior.

\begin{theorem}\tlabel{compact set} Let $\Omega\subset \field{R}^n$ be open and $f\colon \Omega\to \field{R}^d$ be continuous. If for some gauge function $\varphi$
there is a set $N\subset \Omega$ with $\mathcal{H}^\varphi(N)=0$ and $\mathcal{H}^n(f(N))>0$, then there is a Cantor set $K$ such that $\mathcal{H}^\varphi(K)=0$ and $\mathcal{H}^n(f(K))>0$.
\end{theorem}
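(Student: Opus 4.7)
The plan is to realize $K$ as $\bigcap_k F_k$, where each $F_k=\bigsqcup_{\sigma\in I_k}E_\sigma$ is a finite disjoint union of compact source pieces of shrinking diameter, each carrying a compact target $M_\sigma\subset f(E_\sigma)$; the sources will give the Cantor structure, an auxiliary $\varphi$-small cover of $N$ will force $\mathcal{H}^\varphi(K)=0$, and a uniform lower bound on $\sum_\sigma\mathcal{H}^n(M_\sigma)$ will give $\mathcal{H}^n(f(K))>0$. First I would perform descriptive-set-theoretic reductions: since $\mathcal{H}^\varphi$ is Borel regular, replace $N$ by a Borel $\tilde N\supset N$ with $\mathcal{H}^\varphi(\tilde N)=0$ and $\mathcal{H}^n(f(\tilde N))>0$; exhausting $\Omega$ by compact sets I may assume $\tilde N\subset K_0$ for some compact $K_0\subset\Omega$. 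Then $f(\tilde N)$ is analytic and capacitable for $\mathcal{H}^n$, giving a compact $M\subset f(\tilde N)$ with $\mathcal{H}^n(M)>0$; replacing $\tilde N$ by $\tilde N\cap f^{-1}(M)$ I arrange $f(\tilde N)=M$, and start the induction with $E_\emptyset=K_0$, $M_\emptyset=M$.

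For the transition from level $k$ to $k+1$ at a fixed $\sigma$, pick a summable $\eta_k>0$ and proceed in three substeps. \emph{(a) Target split.} Continuity of $a\mapsto\mathcal{H}^n(M_\sigma\cap\{y_1\le a\})$ lets me cut $M_\sigma$ by two parallel hyperplanes into disjoint compacts $M_\sigma^0,M_\sigma^1$ of positive measure whose masses sum to at least $(1-\eta_k/2)\mathcal{H}^n(M_\sigma)$; pull back to compacts $\tilde E_\sigma^i:=E_\sigma\cap f^{-1}(M_\sigma^i)$, for which $M_\sigma^i\subset f(\tilde E_\sigma^i\cap\tilde N)$. \emph{(b) Small cover.} Using $\mathcal{H}^\varphi(\tilde N)=0$, choose a countable cover of $\tilde N$ by open balls $\{B_j\}$ with $\diam B_j<d_{k+1}$ and $\sum_j\varphi(\diam B_j)<\delta_{k+1}$, where $\delta_{k+1}\downarrow 0$ is chosen freely. \emph{(c) First-hit disjointification.} Since the balls may overlap, I disjointify in the target by setting
$$
\tilde M_\sigma^{i,j}=M_\sigma^i\cap f\bigl(\tilde E_\sigma^i\cap\tilde N\cap B_j\bigr)\setminus\bigcup_{j'<j}f\bigl(\tilde E_\sigma^i\cap\tilde N\cap B_{j'}\bigr),
$$
which are pairwise disjoint and partition $M_\sigma^i$. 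Inner regularity of $\mathcal{H}^n$ on the (universally measurable) $\tilde M_\sigma^{i,j}$ yields compact $M_{\sigma,i,j}\subset\tilde M_\sigma^{i,j}$ of almost full mass, and by keeping only finitely many $j$ I arrange $\sum_j\mathcal{H}^n(M_{\sigma,i,j})\ge(1-\eta_k/2)\mathcal{H}^n(M_\sigma^i)$. Finally define $E_{\sigma,i,j}=\tilde E_\sigma^i\cap\overline{B_j}\cap f^{-1}(M_{\sigma,i,j})$. These are compact of diameter $<d_{k+1}$, satisfy $M_{\sigma,i,j}\subset f(E_{\sigma,i,j}\cap\tilde N)$, and are pairwise disjoint across $(\sigma,i,j)$ because their $f$-images are disjoint.

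Setting $K=\bigcap_k F_k$, the construction makes $K$ compact, perfect (each $\sigma$ has at least two descendants, so siblings accumulate at every point of $K$), and totally disconnected (components of $F_k$ sit in one $E_\sigma$ of diameter $<d_k\to 0$), hence a Cantor set; by (b) one has $\mathcal{H}^\varphi(K)\le\delta_k\to 0$. Since $f(\bigcap_k F_k)=\bigcap_k f(F_k)$ for continuous images of decreasing compacts in Hausdorff spaces, and $f(F_k)\supset\bigsqcup_\sigma M_\sigma$ with $\sum_\sigma\mathcal{H}^n(M_\sigma)\ge\prod_{j\le k}(1-\eta_j)\mathcal{H}^n(M)\ge c>0$, downward continuity of $\mathcal{H}^n$ on compacts of finite measure forces $\mathcal{H}^n(f(K))\ge c>0$. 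The main obstacle will be the transition step: producing disjoint source pieces of small diameter that still account for nearly all of the target mass requires forcing the overlapping $\varphi$-small cover to behave disjointly in the target, which is the reason for both the first-hit construction and for the descriptive-set-theoretic preparation providing the universal measurability needed to extract the compact $M_{\sigma,i,j}$.
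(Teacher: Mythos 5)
Your construction is correct, but it reaches the Cantor set by a genuinely different route than the paper. The paper proceeds in two stages: it first builds a single nested sequence of compact sets $K_1\supset K_2\supset\cdots$ (finite unions of small balls intersected with a fixed compact set, chosen to retain a $(1-a_m)$-fraction of the image measure), takes $K'=\bigcap_m K_m$ to get a compact blown-up set with $\mathcal{H}^\varphi(K')=0$, and only afterwards repairs the topology on the source side: it selects a dyadic cube whose subcube boundaries have $f$-images of $\mathcal{H}^n$-measure zero (a dimension-descending argument), deletes small open neighborhoods of these boundaries to gain total disconnectedness, and invokes Cantor--Bendixson to extract the perfect part, whose countable complement has null image. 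You instead build the Cantor structure in one inductive scheme, forcing branching by splitting each target piece into two disjoint compacts of positive measure and forcing disjointness of the source pieces through a first-hit decomposition \emph{in the target}, using universal measurability of differences of analytic sets plus inner regularity to extract the compact targets $M_{\sigma,i,j}$. Both arguments share the same preparation (Borel regularization, analyticity of the image, a compact $M\subset f(N)$ with $0<\mathcal{H}^n(M)<\infty$ via Rogers/Davies -- you should state finiteness, since your inner-regularity and downward-continuity steps use it) and the lemma that $f$ commutes with intersections of nested compacts. Your approach buys a self-contained construction that delivers compactness, perfectness and total disconnectedness simultaneously, with the pleasant bonus that $f(F_k)$ is exactly the disjoint union of the targets $M_{\sigma,i,j}\subset M$, so all measures stay finite automatically; the paper's approach keeps the measure-theoretic bookkeeping lighter (no first-hit sets, no universal measurability beyond the initial analyticity) at the price of the separate dyadic-boundary and Cantor--Bendixson post-processing.

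Two small repairs to your write-up. In substep (a), the map $a\mapsto\mathcal{H}^n(M_\sigma\cap\{y_1\le a\})$ need not be continuous when $d>n$ (a hyperplane $\{y_1=a\}$ can carry positive, even full, $\mathcal{H}^n$-mass of $M_\sigma$); either pick a coordinate whose pushforward of $\mathcal{H}^n\llcorner M_\sigma$ is not a single point mass and cut at a non-atom, or, more simply, use non-atomicity of $\mathcal{H}^n\llcorner M_\sigma$ (points are null) to split it into two Borel halves and inner-regularize each to disjoint compacts of almost full mass. Second, in the bound $\mathcal{H}^\varphi(K)\le\delta_k$ you ignore that each ball $B_j$ may be reused by every piece $(\sigma,i)$ of the current level, so the correct bound carries a factor equal to the (finite, already known) number of such pieces; since $\delta_{k+1}$ is chosen after level $k$ is built, this is absorbed by shrinking $\delta_{k+1}$.
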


First, we need a result about the image of an intersection.
\begin{lemma}\tlabel{intersection of compacts}
Let $A\subset \field{R}^n$ and $f\colon A\to \field{R}^d$ be a continuous mapping. We suppose that $K_1\supset K_2\supset \cdots$ is a sequence of nested compact subsets of $A$. Then
\begin{equation*}
f\Bigl(\bigcap_{m=1}^\infty K_m\Bigr)=\bigcap_{m=1}^\infty f(K_m).
\end{equation*}
\end{lemma}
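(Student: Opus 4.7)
The plan is to establish the two inclusions separately. The inclusion
$f(\bigcap_m K_m)\subset \bigcap_m f(K_m)$ is immediate and requires neither continuity nor compactness: any $x$ lying in every $K_m$ gives $f(x)\in f(K_m)$ for every $m$.

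For the reverse inclusion, I would fix $y\in \bigcap_m f(K_m)$ and reduce the problem to a finite intersection property argument. For each $m$, set
$$
L_m:=K_m\cap f^{-1}(\{y\}).
$$
Since $f$ is continuous, $f^{-1}(\{y\})$ is closed in $A$, so each $L_m$ is a closed subset of the compact set $K_m$ and therefore compact. By the choice of $y$, each $L_m$ is nonempty, and nestedness of $(K_m)$ gives $L_1\supset L_2\supset\cdots$. The $L_m$ all lie inside the compact set $K_1$, so by the finite intersection property there exists $x\in \bigcap_m L_m$. Such an $x$ belongs to $\bigcap_m K_m$ and satisfies $f(x)=y$, which yields $y\in f(\bigcap_m K_m)$.

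The only place where compactness is actually needed is the last step, the finite intersection property for nested nonempty compact sets; continuity is only used to ensure $f^{-1}(\{y\})$ is closed. I do not foresee any real obstacle here: the argument is a standard compactness-plus-continuity routine, and the statement fails without the compactness hypothesis (e.g.\ with $K_m=[m,\infty)$ and $f$ constant on each $K_m$-level set one can cook up counterexamples), so the proof genuinely needs both ingredients.
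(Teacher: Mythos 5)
Your proof is correct, and it is exactly the ``standard application of compactness'' that the paper invokes: the paper does not spell out an argument but simply cites Exercises 4.28 and 4.29vi in Kechris, and the intended reasoning there is the same finite-intersection-property argument on the nonempty nested compact sets $K_m\cap f^{-1}(\{y\})$ that you give. No gaps; the trivial inclusion and the compactness step are both handled properly.
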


\begin{proof} This is a standard application of compactness, see e.g.\
Exercises 4.28 and 4.29vi in \cite{Kechris}.
\end{proof}

\begin{proof}[Proof of \tref{compact set}]
Since Hausdorff measures are Borel regular, we may assume that $N$ is a Borel set. By, for example,  Lemma~423G in \cite{Fremlin4}, we see that $f(N)$ is analytic. We apply Theorem~57 in \cite{Rogers} to obtain a compact set $M\subset f(N)$ with positive and finite $\H^n$-measure. We look at the closed set $f^{-1}(M)$ and by decomposing if necessary, we may further  assume that $N$ is a set of $\mathcal{H}^\varphi$\nobreakdash-measure zero contained in a compact set $K_0$ (not necessarily of measure zero) with \mbox{$\mathcal{H}^n(f(N))\leq\mathcal{H}^n(f(K_0))<\infty$}.

We start with verifying the existence of a compact set $K\subset\Omega$ with $\mathcal{H}^\varphi(K)=0$ and $\mathcal{H}^n(f(K))>0$.

Let us fix a sequence $(a_m)_m$
decreasing to zero such that $a_1<1$ and $\prod_{m=1}^\infty (1-a_m)\geq \frac{1}{2}$.

We may cover $N$ with countably many closed balls $\overline B_i$
such that $\diam {\overline B}_i<a_1$ and $\sum_i \varphi(\diam \overline B_i)<a_1$.

We set
$$L_i:=K_0\cap\bigcup_{k=1}^i \overline B_k
$$
and note that
$$
\mathcal{H}^n\Bigl(\bigcup_i f(L_i\cap N)\Bigr)=\mathcal{H}^n(f(N))>0.
$$
Since the sequence of the sets $L_i$ is increasing, we find $i_1$ such that
\begin{equation*}
\mathcal{H}^n\Bigl(f\Bigl(\bigcup_{i=1}^{i_1}L_i\cap N\Bigr)\Bigr)\geq (1-a_1)\mathcal{H}^n(f(N)).
\end{equation*}
We set
$$
K_1:=\bigcup_{i=1}^{i_1}L_i.
$$
Then $K_1$ is clearly compact.

Let us argue now that we can find a sequence $K_1\supset K_2\supset \cdots$ of nested compact sets with the property that $\mathcal{H}^\varphi(K_m)<a_m$ for $m\geq 1$ and
\begin{equation*}
\mathcal{H}^n(f(K_M))\geq \mathcal{H}^n(f(N\cap K_M))\geq \prod_{m=1}^M (1-a_m) \mathcal{H}^n(f(N))\geq\frac{1}{2}\mathcal{H}^n(f(N)).
\end{equation*}
Having constructed $K_1,\ldots, K_M$, we obtain $K_{M+1}$ by applying above procedure for $N\cap K_M$ and $a_{M+1}$ instead of $N$ and $a_1$, respectively, and intersecting the obtained set by $K_M$.

Let us now define the compact set $K:=\bigcap_{k=1}^\infty K_k$. For all $k\in \field{N}$, we have
\begin{equation*}
\mathcal{H}^\varphi_{a_k}(K)\leq \mathcal{H}^\varphi_{a_k}(K_k)<a_k.
\end{equation*}
Thus $\mathcal{H}^\varphi(K)=0$.

To show that $\mathcal{H}^n(f(K))>0$, we apply \tref{intersection of compacts} and conclude that
\begin{equation*}
\mathcal{H}^n(f(K))=\mathcal{H}^n(\bigcap_{m=1}^\infty f(K_m))=\lim_{m\to \infty} \mathcal{H}^n(f(K_m))\geq \frac{1}{2}\mathcal{H}^n(f(N))>0.
\end{equation*}

Next, we want to show that we can choose $K$ to be totally disconnected. The
compact set constructed in the preceding step will be relabelled as $K'$.
We may assume that $K'$ is a subset of a dyadic cube $I_n$. We
claim first that we can find $1\leq m \leq n$ and a closed dyadic
$m$-dimensional cube $I$ such that
\begin{itemize}
\item[(a)\label{Measure image of cube positive}] $\mathcal{H}^n(f(I\cap K'))>0$, and
\item[(b)] \label{Subcube} $\mathcal{H}^n(f(\partial J\cap K'))=0$ for every $m$\nobreakdash-dimensional
dyadic subcube of $J$ of $I$.
\end{itemize}
Indeed, letting $m=n$,
if $I=I_m$ does not satisfy (b), then there exists a $m$\nobreakdash-dimensional
dyadic subcube
$J_m$ and a ($m{-}1$)\nobreakdash-dimensional
face $I_{m-1}$ of $J_m$ such that
$\mathcal{H}^n(f(\partial I_{m-1}\cap K'))>0$.
We continue this process with $I=I_{m-1}$
until (b) holds for $I=I_k$ for some $k$.
It is clear that the process has to stop for if $m=1$, then the boundaries of the subcubes under consideration are points and therefore mapped to sets of $\mathcal{H}^n$\nobreakdash-measure zero.

We may and will in the following assume that $K'=I\cap K'$. Let $(J_i)$ be an enumeration of all dyadic subcubes of $I$. We fix $0<\varepsilon<\mathcal{H}^n(f(K'))$. The set $K'\setminus \bigcup_i \partial J_i$ is
totally disconnected but not necessarily compact as
the sets $\partial J_i$ are not open. However, choosing $J=J_i$, we can write $\partial J_i$ as intersections $\bigcap_m U_{m}=\bigcap_m \overline{U_m}$ where $U_m$ is the
$1/m$\nobreakdash-neighborhood of $J_i$. Now
\begin{equation*}
\partial J\cap K'=\bigcap_m \overline{U_{m}}\cap K'=\bigcap_m(\overline{U_{m}}\cap K')
\end{equation*}
and thus, by \tref{intersection of compacts}, we see that
\begin{equation*}
0=\mathcal{H}^n(f(\partial J\cap K'))=\mathcal{H}^n\Bigl(f\Bigl(\bigcap_m(\overline{U_{m}}\cap K')\Bigr)\Bigr)=
\mathcal{H}^n\Bigl(\bigcap_m f(\overline{U_{m}}\cap K')\Bigr).
\end{equation*}
We can thus choose an open set $O_i$ such that $\partial J\subset O_i$ and $\mathcal{H}^n(f(K'\cap O_i))<\varepsilon/2^i$.

By Cantor-Bendixson's theorem, see for example Theorem~XIV.5.3 in \cite{Kuratowski}, we may write any closed set as union of a perfect and a countable set.
We extract such a perfect set from $K'\setminus \bigcup_i O_i$ and call it $K$.

It is clear that $K$ is compact and that $\mathcal{H}^\varphi(K)=0$. By the choice of the sets $O_i$, we further have $\mathcal{H}^n(f(K))>0$, and by above choice, $K$ is perfect.

To show that $K$ is totally disconnected, let us assume that $A\subset K$ and $x$ and $y$ are two points in $A$. Then there is a dyadic cube $J$ containing $x$ but not $y$. Then the set $A\setminus \partial J=A$ is the union of two disjoint open sets one containing $x$ and the other $y$. Thus $A$ is not connected. We conclude that $K$ is totally disconnected.
\end{proof}

\bibliographystyle{amsalpha}
\bibliography{Hoelder}

\def\cprime{$'$} \def\cprime{$'$} \newcommand{\noopsort}[1]{}
  \def\preambleText{}\def\cprime{$'$}
\providecommand{\bysame}{\leavevmode\hbox to3em{\hrulefill}\thinspace}
\providecommand{\MR}{\relax\ifhmode\unskip\space\fi MR }
% \MRhref is called by the amsart/book/proc definition of \MR.
\providecommand{\MRhref}[2]{%
  \href{http://www.ams.org/mathscinet-getitem?mr=#1}{#2}
}
\providecommand{\href}[2]{#2}
\begin{thebibliography}{KKM99}

\bibitem[EG92]{EG92}
Lawrence~C. Evans and Ronald~F. Gariepy, \emph{Measure theory and fine
  properties of functions}, Studies in Advanced Mathematics, CRC Press, Boca
  Raton, FL, 1992. \MR{MR1158660 (93f:28001)}

\bibitem[Fal86]{FalconerGeometry}
K.~J. Falconer, \emph{The geometry of fractal sets}, Cambridge Tracts in
  Mathematics, vol.~85, Cambridge University Press, Cambridge, 1986.
  \MR{MR867284 (88d:28001)}

\bibitem[Fre06]{Fremlin4}
D.~H. Fremlin, \emph{Measure theory. {V}ol. 4}, Torres Fremlin, Colchester,
  2006, Topological measure spaces. Part I, II, Corrected second printing of
  the 2003 original. \MR{2462372 (2011a:28004)}

\bibitem[HY61]{HockingYoung}
John~G. Hocking and Gail~S. Young, \emph{Topology}, Addison-Wesley Publishing
  Co., Inc., Reading, Mass.-London, 1961. \MR{0125557 (23 \#A2857)}

\bibitem[Kec95]{Kechris}
Alexander~S. Kechris, \emph{Classical descriptive set theory}, Graduate Texts
  in Mathematics, vol. 156, Springer-Verlag, New York, 1995.

\bibitem[KK]{KK}
A.~Kauranen and P.~Koskela, \emph{Boundary blow up under {S}obolev mappings},
  Preprint, \url{http://arxiv.org/abs/1304.4295}.

\bibitem[KKM99]{kauhanen_functions_1999}
Janne Kauhanen, Pekka Koskela, and Jan Mal{\'y}, \emph{On functions with
  derivatives in a {L}orentz space}, Manuscripta Mathematica \textbf{100}
  (1999), 87--101.

\bibitem[KMZ12]{KMZ}
Pekka Koskela, Jan Mal{\'y}, and Thomas Z{\"u}rcher, \emph{Luzin's condition
  ({N}) and {S}obolev mappings}, Atti Accad. Naz. Lincei Cl. Sci. Fis. Mat.
  Natur. Rend. Lincei (9) Mat. Appl. \textbf{23} (2012), no.~4, 455--465.
  \MR{2999556}

\bibitem[Kur72]{Kuratowski}
Kazimierz Kuratowski, \emph{Introduction to set theory and topology}, english
  ed., Pergamon Press, Oxford, 1972, Containing a Supplement on `Elements of
  algebraic topology' by Ryszard Engelking, First edition translated from the
  Polish by Leo F. Boron, International Series of Monographs in Pure and
  Applied Mathematics, Vol. 101. \MR{0346724 (49 \#11449)}

\bibitem[KZ]{KZ}
P.~Koskela and A.~Zapadinskaya, \emph{Generalized dimension estimates for
  images of porous sets under monotone {S}obolev mappings}, Preprint.

\bibitem[Mal94]{MalyTheArea}
Jan Mal{\'y}, \emph{The area formula for {$W^{1,n}$}-mappings}, Comment. Math.
  Univ. Carolin. \textbf{35} (1994), no.~2, 291--298. \MR{1286576 (95h:28007)}

\bibitem[MM73]{MM73}
Moshe Marcus and Victor.~J. Mizel, \emph{Transformations by functions in
  {S}obolev spaces and lower semicontinuity for parametric variational
  problems}, Bull. Amer. Math. Soc. \textbf{79} (1973), \hbox{790--795}.
  \MR{MR0322651 (48 \#1013)}

\bibitem[MM95]{mal_lusins_1995}
Jan Mal{\'y} and Olli Martio, \emph{Lusin's condition {(N)} and mappings of the
  class ${W}^{1,n}$}, Journal f{\"u}r die Reine und Angewandte Mathematik
  \textbf{458} (1995), 19--36.

\bibitem[Rog70]{Rogers}
C.~A. Rogers, \emph{Hausdorff measures}, Cambridge University Press, London,
  1970. \MR{MR0281862 (43 \#7576)}

\end{thebibliography}
\end{document}